\DeclareMathOperator{\Aut}{Aut}
\DeclareMathOperator{\Jac}{Jac}
\DeclareMathOperator{\ord}{ord}
\DeclareMathOperator{\PGL}{PGL}
\DeclareMathOperator{\Proj}{Proj}
\newcommand{\bbF}{{\operatorname{\mathbb F}}}
\newcommand{\bbP}{{\operatorname{\bf P}}}
\newcommand{\bbZ}{{\operatorname{\bf Z}}}
\newcommand{\calH}{{\mathcal H}}
\newcommand{\calL}{{\mathcal L}}
\newcommand{\calS}{{\mathcal S}}
\newcommand{\calV}{{\mathcal V}}
\renewcommand{\tilde}{\widetilde}
\def\@marginparreset{\marginparstyle}
\def\marginparstyle{\SMALL\normalfont\raggedright\openup-2pt }
\long\def \@savemarbox #1#2{%
 \global\setbox #1%
     \vtop{%
       \hsize\marginparwidth
       \@parboxrestore
       \reset@font
       \@setnobreak
       \@setminipage
       \@marginparreset
       #2%
       \par
       \global\@minipagefalse
       }%
}
\DeclareRobustCommand\marginparhere[2][0pt]{%
 \ifhmode\unskip\fi
 \ifmmode\ssty\mathclose{\fi
   \rlap{\hskip\marginparsep\smash{%
     \vtop to 0pt{\marginparstyle \hsize\marginparwidth 
       \leftskip=#1 \rightskip=-#1 plus20pt \noindent#2\vss}}}%
 \ifmmode}\fi
}
\DeclareRobustCommand{\redden}{\@ifnextchar*
 {\@latex@error{{redden*} is only an environment}}
 {\@ifnextchar[{\r@dden}{\r@dd@n}}}
\def\r@dden[#1]{\ifmmode\@mperr{math}\else\ifinner\@mperr{inner}%
 \else\leavevmode\marginpar{\leavevmode#1\endgraf}\fi\fi\r@dd@n}
\def\r@dd@n{\def\reserved@a{redden}
 \ifx\@currenvir\reserved@a\redd@n\bgroup\ignorespaces
 \else\expandafter\redd@n\fi}
\def\endredden{\unskip\egroup}
\def\@mperr#1{\@latex@warning{redden in #1 mode: no marginpar possible}}
\def\redd@n#1{\leavevmode{\color{red}#1}}
\newtheorem{theorem}{Theorem}[section]
\newtheorem{lemma}[theorem]{Lemma}
\newtheorem{proposition}[theorem]{Proposition}
\newtheorem{corollary}[theorem]{Corollary}
\theoremstyle{definition}
\newtheorem{algorithm}[theorem]{Algorithm}
\theoremstyle{remark}
\newtheorem{remark}[theorem]{Remark}
\numberwithin{equation}{section}
\begin{document}

\title[Superspecial Howe curves of genus $4$]{Algorithms to enumerate\\ superspecial Howe curves of genus \texorpdfstring{$4$}{4}}

\author[Kudo]{Momonari Kudo}
\address{Department of Mathematical Informatics, Graduate School of Information Science and Technology,
The University of Tokyo, 7-3-1, Hongo, Bunkyo-ku,
Tokyo 113-8656, Japan}
\curraddr{}
\email{kudo@mist.i.u-tokyo.ac.jp}
\urladdr{\url{https://sites.google.com/view/m-kudo-official-website/english?authuser=0}}

\author[Harashita]{Shushi Harashita}
\address{Graduate School of Environment and Information Sciences,
Yokohama National University, 79-7, Tokiwadai, Hodogaya-ku,
Yokohama 240-8501, Japan}
\curraddr{}
\email{harasita@ynu.ac.jp}

\author[Howe]{Everett W. Howe}
\address{Unaffiliated mathematician, San Diego, CA 92104}
\curraddr{}
\email{however@alumni.caltech.edu}
\urladdr{\url{http://ewhowe.com}}

\subjclass[2020]{Primary 11G20; Secondary 14G15, 14H45}

\keywords{Algebraic curves, superspeciality}

\date{29 July 2020}

\dedicatory{}

\begin{abstract}
A \emph{Howe curve} is a curve of genus $4$ obtained as the fiber product
of two genus-$1$ double covers of $\bbP^1$.
In this paper, we present a simple algorithm for testing isomorphism of Howe curves,
and we propose two main algorithms for finding and enumerating these curves:
One involves solving multivariate systems coming from Cartier--Manin matrices, while the other uses Richelot isogenies
of curves of genus $2$.
Comparing the two algorithms by implementation and by complexity analyses, we conclude that the latter enumerates curves more efficiently.
Using these algorithms, we show that there exist superspecial curves of genus $4$ in characteristic $p$ for every prime $p$ with $7 < p < 20000$.
\end{abstract}

\maketitle

\section{Introduction}\label{sec:intro}
\subsection{Background and motivation}
Let $K$ be an algebraically closed field of characteristic $p>0$.
A nonsingular curve over $K$ is called \emph{superspecial} (resp.\ \emph{supersingular}) if its Jacobian variety is isomorphic (resp.\ isogenous) to a product of supersingular elliptic curves.
Superspecial curves are not only theoretically interesting in algebraic geometry and number theory but also have many applications in coding theory, cryptology, and so on, because they tend to have many rational points and their Jacobian varieties have large endomorphism rings.
However, it is not always easy to find such curves,
and there are only finitely many superspecial curves for a given genus and characteristic.
One method of constructing
superspecial curves is to consider fiber products of superspecial curves of lower genera.
In this paper, we demonstrate that this method can be efficient by considering the simplest 
example in which the genus is at least $4$:
the case of Howe curves. 
A \emph{Howe curve}\footnote{So named by Senda and the first two authors in \cite{KHS20}.} is a curve of genus $4$ obtained as the fiber product of two genus-$1$ double covers $E_1\to\bbP^1$ and $E_2\to\bbP^1$.
In \cite{Howe}, the third author studied these curves in order to quickly construct genus-$4$ curves with many rational points.

\subsection{Related works}
The reason that we consider the case of genus $g\ge 4$ is that the enumeration of the isomorphism classes of superspecial curves with $g\le 3$ has already been done, by Deuring~\cite{Deuring} for $g=1$, by Ibukiyama, Katsura, and Oort \cite{IKO} for $g=2$, and by Brock~\cite{Brock} for $g=3$; see also Ibukiyama~\cite{Ibukiyama} and Oort~\cite{Oort91} for the existence of such curves for $g=3$.
In contrast to the case $g\le 3$, the existence or non-existence of a superspecial curve of genus $4$ in general characteristic is an open problem, although some results for specific small $p$ are known; 
see \cite[Theorem~1.1]{Ekedahl} for the non-existence for $p\le 3$ and \cite[Theorem~B]{KH17} for the non-existence for $p=7$.
As for enumeration, computational approaches have been proposed recently in \cite{KH18}, \cite{KH17}, and \cite{KH19} in the case of genus~$4$. The main strategy common to 
these papers is to parametrize a family of curves (canonical curves in 
the first two papers, hyperelliptic curves in the third), and then to
find the superspecial curves $X$ in these families
by computing the zeros of a multivariate system derived from the condition that the Cartier--Manin matrix of $X$ is zero.
With computer algebra techniques such as Gr\"{o}bner bases, the authors 
of these papers enumerated superspecial canonical
curves for $p \leq 11$ in \cite{KH18} and \cite{KH17} and superspecial hyperelliptic curves for $p \leq 23$ in~\cite{KH19}.
However, results for larger $p$ have not been obtained yet due to the cost of solving multivariate systems, and no complexity analysis is given in \cite{KH18}, \cite{KH17}, or~\cite{KH19}.

Now we  turn our attention to Howe curves.
Recently, it was proven in \cite{KHS20} that there exists a supersingular Howe curve in every positive characteristic.
In particular, the authors of \cite{KHS20} reduce the existence of such a curve
to the existence of a zero of a certain multivariate system, as follows:
They study a family of Howe curves realized as
$E_1\colon z^2=f_1(x)$ and $E_2\colon w^2=f_2(x)$ for cubic polynomials $f_1$ and $f_2$
parametrized by elements $(\lambda : \mu : \nu)$ of $\bbP^2$.
Let $C$ be the genus-$2$ curve $y^2 = f_1f_2$.
The supersingularity of $H$ is equivalent to
that of $E_1$, $E_2$ and $C$,
because there exists an isogeny of 
$2$-power degree from the Jacobian $J (H)$ to $E_1 \times E_2 \times J(C)$~\cite[Theorem~2.1]{Howe}).
Thus, once supersingular isomorphism classes of $E_1$ and $E_2$ are given,
finding supersingular curves $H$ is reduced to finding values of the parameter $(\lambda : \mu : \nu)$ that satisfy a multivariate system
derived from the supersingularity of $C$.
The authors of \cite{KHS20} deduced the existence of such a zero $(\lambda : \mu : \nu)$ from various algebraic properties of the defining polynomials of the system.

The above reduction is applicable also for the superspecial case, but the method used in \cite{KHS20} to prove the existence of solutions does not carry over well.
For this reason, the superspecial case is still open, and we are left to ask:
For which primes $p>7$ are there superspecial Howe curves in characteristic~$p$?

\subsection{Our contribution}
We study the existence of superspecial Howe curves by creating efficient
algorithms to produce and enumerate them. The following theorems summarize
some of what we have found.

\begin{theorem}\label{thm:main1}
For every prime $p$ with $7 < p < 20000$ or with $p\equiv 5\bmod 6$, there exists a superspecial Howe curve in characteristic~$p$.
\end{theorem}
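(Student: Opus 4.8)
The plan is to establish the statement in two overlapping pieces. The arithmetic progression $p\equiv 5\bmod 6$ is handled by an explicit one-parameter family of Howe curves, and the remaining primes in the stated range --- those with $p\equiv 1\bmod 6$ and $7<p<20000$ --- are handled by a finite computation using the Richelot-isogeny-based algorithm (the more efficient of the two algorithms developed below). Both pieces rest on the same reduction. Fix an odd prime $p$ and a Howe curve $H$ built from $E_1\colon z^2=f_1(x)$ and $E_2\colon w^2=f_2(x)$, with associated genus-$2$ curve $C\colon y^2=f_1(x)f_2(x)$. The isogeny $\Jac(H)\to E_1\times E_2\times\Jac(C)$ of \cite[Theorem~2.1]{Howe} has $2$-power degree, hence degree prime to $p$; such an isogeny is \'etale and induces a Frobenius-equivariant isomorphism on the first cohomology of the structure sheaf, so superspeciality is preserved along it. Consequently $H$ is superspecial if and only if $E_1$ and $E_2$ are supersingular and $C$ is superspecial (recall that for elliptic curves superspecial and supersingular coincide).

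For $p\equiv 5\bmod 6$ --- equivalently, $p\equiv 2\bmod 3$ with $p>3$ --- I would take $f_1(x)=x^3-1$ and $f_2(x)=x^3+1$. Then $E_1\colon z^2=x^3-1$ and $E_2\colon w^2=x^3+1$ both have $j$-invariant $0$ and hence are supersingular, and the associated genus-$2$ curve is $C\colon y^2=x^6-1$. Since $f_1$ and $f_2$ are coprime separable cubics, the fiber product is smooth of genus $4$ --- the only common branch point of $E_1\to\bbP^1$ and $E_2\to\bbP^1$ being $\infty$ --- so $H$ is a genuine Howe curve. Finally, $C$ carries the two degree-$2$ maps $(x,y)\mapsto(x^2,y)$ and $(x,y)\mapsto(1/x^2,\sqrt{-1}\,y/x^3)$ to the supersingular elliptic curve $E\colon Y^2=X^3-1$, and together these give a $(2,2)$-isogeny $\Jac(C)\to E\times E$; its degree $4$ is prime to $p$, so $\Jac(C)$ is superspecial. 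By the reduction, $H$ is a superspecial Howe curve, which settles all primes $p\equiv 5\bmod 6$ --- in particular those with $7<p<20000$.

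For the remaining primes $p\equiv 1\bmod 6$ with $7<p<20000$, the proof is a finite verification: I would run the Richelot-isogeny algorithm, which --- starting from the finitely many supersingular elliptic curves in characteristic $p$ and walking the graph of Richelot (that is, $(2,2)$-) isogenies --- searches for pairs of cubics $f_1,f_2$ for which $E_1,E_2$ are supersingular and $C$ is superspecial; by the reduction, each such output is a superspecial Howe curve. The content of this half of the theorem is then the result of the computation, namely that for every such $p$ the algorithm returns at least one curve; together with the previous paragraph this covers all primes $p$ with $7<p<20000$.

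I expect the main obstacle to be the practical feasibility of this last step across all of the roughly one thousand primes $p\equiv 1\bmod 6$ below $20000$: a direct attack through the vanishing of the Cartier--Manin matrix reduces to solving large multivariate polynomial systems and does not scale far beyond the ranges already reached in \cite{KH18}, \cite{KH17}, and \cite{KH19}, so the real work lies in designing --- and giving a complexity analysis for --- the Richelot-isogeny algorithm that brings the bound $20000$ within reach. The case $p\equiv 5\bmod 6$ presents no genuine difficulty beyond confirming that the fiber product is smooth of genus $4$ and that the isogeny degrees involved, all powers of $2$, are prime to $p$, which holds because $p\ge 5$.
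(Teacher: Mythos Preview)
Your proposal is correct and follows essentially the same route as the paper: an explicit Howe curve over the $j=0$ elliptic curve for $p\equiv 5\bmod 6$, and the Richelot-isogeny search for the remaining primes in the range. Your choice $f_1=x^3-1$, $f_2=x^3+1$ is exactly the paper's case $a=-1$; the only cosmetic difference is that you split $\Jac(C)$ via the involutions $(x,y)\mapsto(-x,\pm y)$, whereas the paper uses $(x,y)\mapsto(a^{1/3}/x,\pm a^{1/2}y/x^3)$ --- both yield a $(2,2)$-isogeny to a product of $j=0$ curves.
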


\begin{theorem}\label{thm:main2}
For every prime $p$ with $7 < p \le 199$, the number of isomorphism classes of superspecial Howe curves in characteristic $p$ is given in 
Table~\textup{\ref{table:enumerate}}.
\end{theorem}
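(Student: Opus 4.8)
The plan is to derive Theorem~\ref{thm:main2} as the output of a provably complete enumeration, carried out for each prime $p$ in the stated range. The backbone is the reduction recalled in the introduction: by \cite[Theorem~2.1]{Howe} there is an isogeny of $2$-power degree from $\Jac(H)$ to $E_1\times E_2\times \Jac(C)$, where $C\colon y^2=f_1(x)f_2(x)$; since $p>2$ this isogeny restricts to an isomorphism of $p$-divisible groups, and superspeciality depends only on the $p$-divisible group, so $H$ is superspecial if and only if each of $E_1$, $E_2$, and $C$ is superspecial. Consequently every superspecial Howe curve comes from an ordered pair $(E_1,E_2)$ of supersingular elliptic curves together with a superspecial genus-$2$ curve $C$ that fits into a Howe configuration with them. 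Both ingredient lists are finite and computable: the supersingular $j$-invariants are the roots of the classical supersingular polynomial over $\bbF_p$ (about $p/12$ of them, all in $\bbF_{p^2}$), and the number of superspecial genus-$2$ curves is known from Ibukiyama, Katsura, and Oort~\cite{IKO}, which gives a check on the next step.

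First I would generate the superspecial genus-$2$ curves $C$ by traversing the Richelot $(2,2)$-isogeny graph starting from the products $E\times E'$ of supersingular elliptic curves and collecting every target that is a Jacobian; comparing the resulting count with \cite{IKO} confirms that none is missed. Then, for each such $C$ and each of the (at most ten) ways to split the six Weierstrass points of $C$ into two triples, the two triples together with the choice of a common auxiliary branch point $P\in\bbP^1$ reconstruct an explicit Howe curve $E_1\colon z^2=f_1(x)$, $E_2\colon w^2=f_2(x)$, $C\colon y^2=f_1(x)f_2(x)$; the requirement that $E_1$ and $E_2$ both be supersingular is a zero-dimensional condition on $P$ alone, since the $j$-invariants of $E_1$ and $E_2$ are explicit rational functions of $P$ that must take values among the roots of the supersingular polynomial, so it is solved by a greatest-common-divisor computation in $\bbF_{p^2}[P]$. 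By the reduction above, the union of these finitely many curves over all $C$, all splittings, and all admissible $P$ is exactly the set of superspecial Howe curves, now presented by explicit models. The alternative approach --- parametrize Howe curves directly by $(\lambda:\mu:\nu)\in\bbP^2$ as in \cite{KHS20} and solve the multivariate system expressing that the Cartier--Manin matrix of $C$ vanishes --- produces the same set and serves as an independent verification for small $p$; the paper's complexity comparison is what justifies preferring the Richelot route for the larger primes.

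The list so produced contains each curve many times over: once for each ordering of $(E_1,E_2)$, once for each element of the $S_3$-action on Legendre-type parameters, once for each relabelling of the Weierstrass-point triples, and once for each choice in the residual coordinate freedom on $\bbP^1$. The final step is to collapse the list to isomorphism classes using the isomorphism-testing algorithm for Howe curves developed earlier in the paper, and to record the number of classes; that number is the entry of Table~\ref{table:enumerate}. To close the argument one should also confirm completeness directly --- every superspecial Howe curve $H$ has its two elliptic quotients on the supersingular list and its genus-$2$ quotient on the superspecial list, and is recovered from the tuple (two quotients, genus-$2$ curve, splitting, $P$) --- and separately handle the degenerate configurations, such as $E_1\cong E_2$, $f_1$ and $f_2$ sharing a root, or curves with extra automorphisms, so that the genus-$4$ hypothesis and the input hypotheses of the isomorphism test always hold.

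The step I expect to be the main obstacle is making the computation feasible for $p$ near $199$: there are on the order of a dozen supersingular $j$-invariants, hence a Richelot-graph traversal of nontrivial size, a pass over all two-triple splittings of each superspecial genus-$2$ curve, a gcd over $\bbF_{p^2}$ for each, and finally isomorphism testing across a candidate list that can be long --- all of which must be organized to run in practical time and certified to be carried out exactly over the finite fields involved, with no spurious or missed roots. A secondary but real difficulty is accounting precisely for the several sources of redundancy above, so that the numbers reported in Table~\ref{table:enumerate} are exact and not merely upper bounds.
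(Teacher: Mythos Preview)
Your proposal is correct and follows essentially the same route as the paper's $C$-first strategy (Algorithm~\ref{alg:main2}): build the complete list of superspecial genus-$2$ curves by traversing the Richelot isogeny graph seeded from products of supersingular elliptic curves, then for each such curve and each three--three splitting of its Weierstrass points solve for the extra branch point making both elliptic quotients supersingular, and finally collapse to isomorphism classes via the structure-map test of Corollary~\ref{C:isomorphism}. The only cosmetic difference is that the paper certifies completeness of the genus-$2$ list by invoking Jordan--Zaytman's connectivity theorem rather than by matching the count against the Ibukiyama--Katsura--Oort formula, and it checks the second elliptic curve one $b$ at a time rather than via a single gcd; both variants are equivalent for the purpose at hand.
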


\begin{table}[ht]
\caption{For each prime $p$ from $11$ to $199$, we give the number $n(p)$ of superspecial Howe curves over $\overline{\bbF}_p$ and the ratio of $n(p)$ 
to the heuristic prediction $p^3/1152$ (see Section~\ref{sec:second}).}
\label{table:enumerate}
\vspace{-1ex}
\begin{center}
\renewcommand{\arraystretch}{0.9}
\begin{tabular}{rrrrrrrrrrr}
\toprule
  $p$ &  $n(p)$ &\small    Ratio &\hbox to 1em{}&   $p$ &  $n(p)$ &    Ratio &\hbox to 1em{}&   $p$ &  $n(p)$ &    Ratio \\
\cmidrule{1-3}\cmidrule{5-7}\cmidrule{9-11}
 $11$ &     $4$ &  $3.462$ &&  $67$ &   $260$ &  $0.996$ && $137$ &  $2430$ &  $1.089$ \\
 $13$ &     $3$ &  $1.573$ &&  $71$ &   $742$ &  $2.388$ && $139$ &  $2447$ &  $1.050$ \\
 $17$ &    $10$ &  $2.345$ &&  $73$ &   $316$ &  $0.936$ && $149$ &  $3082$ &  $1.073$ \\
 $19$ &     $4$ &  $0.672$ &&  $79$ &   $595$ &  $1.390$ && $151$ &  $3553$ &  $1.189$ \\
 $23$ &    $33$ &  $3.125$ &&  $83$ &   $655$ &  $1.320$ && $157$ &  $3427$ &  $1.020$ \\
 $29$ &    $45$ &  $2.126$ &&  $89$ &   $863$ &  $1.410$ && $163$ &  $3518$ &  $0.936$ \\
 $31$ &    $59$ &  $2.281$ &&  $97$ &   $802$ &  $1.012$ && $167$ &  $6268$ &  $1.550$ \\
 $37$ &    $41$ &  $0.932$ && $101$ &  $1207$ &  $1.350$ && $173$ &  $4780$ &  $1.064$ \\
 $41$ &   $105$ &  $1.755$ && $103$ &  $1151$ &  $1.213$ && $179$ &  $5771$ &  $1.159$ \\
 $43$ &    $79$ &  $1.145$ && $107$ &  $1237$ &  $1.163$ && $181$ &  $5419$ &  $1.053$ \\
 $47$ &   $235$ &  $2.608$ && $109$ &  $1193$ &  $1.061$ && $191$ &  $9610$ &  $1.589$ \\
 $53$ &   $167$ &  $1.292$ && $113$ &  $1323$ &  $1.056$ && $193$ &  $6298$ &  $1.009$ \\
 $59$ &   $259$ &  $1.453$ && $127$ &  $2013$ &  $1.132$ && $197$ &  $6839$ &  $1.030$ \\
 $61$ &   $243$ &  $1.233$ && $131$ &  $2606$ &  $1.335$ && $199$ &  $8351$ &  $1.221$ \\
\bottomrule
\end{tabular}
\vskip 3ex
\end{center}
\end{table}

The upper bounds on $p$ in these two theorems can easily be increased. For example, on a 2.8 GHz Quad-Core Intel Core i7 with 16GB RAM, computing the 
$8351$ superspecial Howe curves in characteristic $199$ using method (B) below
took $124$ seconds in Magma. Finding examples of superspecial Howe curves for
every $p$ between $7$ and $20000$ took $680$ minutes on the same machine.

In this paper we discuss two strategies, (A) and (B) below, to find superspecial Howe curves. We also show how isomorphisms between Howe curves can be easily detected from the data that defines them (C).

\subsection*{(A) \texorpdfstring{$(E_1,E_2)$}{(E1,E2)}-first, using Cartier--Manin matrices.}
In this strategy, we use the same realization of Howe curves as in \cite{KHS}, that is,
the fiber product of
$E_1\colon z^2y = x^3 + A_1 \mu^2 xy^2 + B_1 \mu^3y^3$ and
$E_2\colon w^2y = (x-\lambda)^3 + A_2 \mu^2 (x-\lambda)y^2 + B_2 \mu^3y^3$
over $\bbP^1=\Proj K[x,y]$.
We enumerate pairs $(E_1,E_2)$ of supersingular elliptic curves
so that $C$ is superspecial.
We first discuss the field of definition of superspecial Howe curves (cf.\;Proposition \ref{prop:field_of_def}), which enables us to reduce the size of our search space drastically.
Specifically, the coordinates $A_1$, $B_1$, $A_2$, $B_2$, $\lambda$, $\mu$, $\nu$ belong to $\bbF_{p^2}$, whereas in the supersingular case~\cite{KHS20} these coordinates can generate larger 
subfields of $\overline{\bbF}_{p}$.
For the test of superspeciality, we use the criterion that the Cartier--Manin matrix of $C$ must be zero \cite[Lemma~1.1(i)]{IKO}. 
This reduces the enumeration problem to solving a system of algebraic equations. See Section \ref{sec:first} for the details of this strategy, including a complexity analysis.

\subsection*{(B) \texorpdfstring{$C$}{C}-first, using Richelot isogenies.}
The second strategy first enumerates
superspecial curves $C\colon y^2=f(x)$ of genus $2$ with $f(x)$ of degree $6$
and then enumerates decompositions $f(x)=f_1(x)f_2(x)$ with $f_i(x)$ of degree $3$
so that there is a $b$ that makes both curves $E_i\colon y^2 = (x-b)f_i(x)$ supersingular.
The moduli space of curves of genus $2$ is of dimension $3$.
As this dimension is bigger than the space of $(\lambda:\mu:\nu)\in \bbP^2$ considered in (A), this strategy, a priori, looks inefficient. But, surprisingly, we conclude that
strategy (B) enumerates superspecial Howe curves much more efficiently than does (A).
The advantage of (B) comes from making use of Richelot isogenies.
Specifically, we construct some superspecial curves of genus $2$ by 
gluing supersingular elliptic curves together along their $2$-torsion
\cite[\S3]{HLP}, and then produce more such curves by applying Richelot
isogenies to the curves already produced. This procedure terminates because there
are only finitely many superspecial curves of genus $2$, and a recent result
of Jordan and Zaytman~\cite[Corollary~18]{JZ} shows that we obtain all isomorphism classes of superspecial curves of genus $2$ in this way.

\subsection*{(C) A new isomorphism test for Howe curves.}
Strategy (A) above produces many 
not-necessarily-distinct Howe curves, so to prevent overcounting we are left with the task of
producing a unique representative for each isomorphism class. As every Howe curve is canonical (cf.\;Proposition \ref{HoweIsCanonical}), one may check whether two Howe curves are isomorphic by using the isomorphism test for canonical curves given in \cite[\S6.1]{KH17}, whose implementation is found in \cite[\S4.3]{KH18}. This turns out to be very costly, because it uses many 
Gr\"obner basis computations. Our Corollary~\ref{C:isomorphism} gives a 
much simpler isomorphism test, based on the observation that a Howe curve
is completely determined (up to isomorphism) by the degree-$2$ map to a genus-$2$ curve it is provided with by virtue of its definition as a fiber product. This isomorphism test is added on as a separate step in strategy (A), but is baked into the algorithm we use for strategy (B).

\subsection*{Acknowledgments}
The first and the second authors thank Everett Howe for
joining as the third author; he told them the second strategy (B), which had not been considered in the earlier version \cite{KH20}.
The third author thanks Professors Kudo and Harashita for inviting him to join them in this work.
The authors thank the referees for their careful reading and for helpful suggestions and comments.
This work was supported by JSPS Grant-in-Aid for Scientific
Research (C) 17K05196, JSPS Grant-in-Aid for Research Activity Start-up 18H05836 
and 19K21026, and JSPS Grant-in-Aid for Young Scientists 20K14301.

\section{Howe curves and their superspeciality}\label{sec:Howe}
In this section, 
we recall the definition of Howe curves, show that they are canonical,
and give a computational criterion for their superspeciality.

Let $K$ be an algebraically closed field of characteristic $p\neq 2$.
A \emph{Howe curve} over $K$ is a curve which is isomorphic to the desingularization of
the fiber product $E_1 \times_{{\bbP}^1} E_2$ of two genus-$1$ double covers
$E_i\to {\bbP}^1$ ramified over $S_i$, where
each $S_i$ consists of 4 points and where $|S_1\cap S_2|=1$.

Given a Howe curve, there is an automorphism of $\bbP^1$ that takes the common ramification point
of the two genus-$1$ double covers to infinity. Then the curves $E_i$ can be written
$w^2 = f_1$ and $z^2 = f_2$ for separable monic cubic polynomials $f_i\in K[x]$ that
are coprime to one another, where $x$ generates the function field of~$\bbP^1$.

\begin{lemma}\label{HoweIsCanonical}
Every Howe curve is a canonical curve of genus $4$.
\end{lemma}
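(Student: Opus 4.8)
The plan is to show two things: that a Howe curve $H$ has genus $4$, and that its canonical embedding realizes it as a curve in $\bbP^3$ (equivalently, that $H$ is neither hyperelliptic nor trigonal with the canonical linear system behaving non-classically). For the genus computation, I would work directly with the fiber product model. Write $E_1\colon w^2=f_1(x)$ and $E_2\colon z^2=f_2(x)$ with $f_1,f_2\in K[x]$ separable monic cubics that are coprime, as in the setup just before the statement. The desingularization $H$ of $E_1\times_{\bbP^1}E_2$ carries two independent degree-$2$ maps, and the cleanest route is the Riemann--Hurwitz formula applied to the double cover $H\to E_1$ (or $H\to E_2$): this cover is ramified exactly over the points of $E_1$ lying above the three roots of $f_2$, since those are the places where $f_2$ changes sign on $E_1$ and the common ramification point has been sent to infinity (where $f_1$ has a pole of odd order but $f_2$ does not vanish, so no extra ramification occurs there once one tracks orders carefully). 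Each of the three roots of $f_2$ gives two points on $E_1$, for $6$ branch points, so $2g_H-2 = 2(2\cdot 1-2) + 6 = 6$, giving $g_H=4$.

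Next I would identify the canonical model. The surest argument is to produce four independent regular differentials on $H$ and check they generate a base-point-free degree-$6$ linear system embedding $H$ in $\bbP^3$. Concretely, pulling back $dx/w$ from $E_1$ and $dx/z$ from $E_2$ gives two holomorphic differentials on $H$; multiplying the differential $dx/(wz)$ (which is regular on $H$, being a quotient by the fiber-product relation) by $1$ and by $x$ gives two more. One then checks these span a $4$-dimensional space and that the associated map has image of degree $6$ — which forces it to be the canonical map, since a genus-$4$ curve has canonical degree $2g-2=6$ and a map given by a $g^3_6$ is either the canonical embedding or $2$-to-$1$ onto a rational normal cubic in $\bbP^3$. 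Alternatively, and perhaps more slickly, I would invoke that a smooth curve of genus $4$ fails to be canonically embedded only when it is hyperelliptic, and rule that out: a hyperelliptic curve of genus $4$ has a unique $g^1_2$, hence a unique involution with rational quotient, whereas $H$ admits (at least) the two distinct involutions coming from the two double-cover structures $H\to E_1$ and $H\to E_2$ (distinct because $E_1\not\cong E_2$ as covers of $\bbP^1$ — their branch loci differ), and neither has a rational quotient. So $H$ is non-hyperelliptic, and every non-hyperelliptic genus-$4$ curve is canonical.

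I expect the main obstacle to be the bookkeeping at the point at infinity when computing ramification for Riemann--Hurwitz: one must verify that sending the common branch point to $\infty$ does not secretly introduce ramification of $H\to E_1$ over the point(s) of $E_1$ at infinity, and this requires looking at local uniformizers and the parity of pole orders of $f_1$ and $f_2$ there. A careful local analysis shows $f_2$ is a unit at infinity on $E_1$ while $f_1$ has even valuation after passing to $E_1$, so the cover is unramified there — but this is the step where an off-by-one error is easiest to make. The rest (exhibiting the differentials, or the uniqueness-of-$g^1_2$ argument for hyperellipticity) is routine once the genus is pinned down.
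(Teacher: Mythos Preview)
Your Riemann--Hurwitz computation of the genus is essentially correct, though the claim that ``$f_2$ is a unit at infinity on $E_1$'' is not: since $x$ has a double pole at the unique point of $E_1$ over $\infty$ and $f_2$ is cubic, $f_2$ has a pole of order~$6$ there. The correct reason $H\to E_1$ is unramified at infinity is that this pole order is \emph{even}.

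The real gap is in your non-hyperellipticity argument. You observe that the involutions $\iota_1,\iota_2$ giving $H\to E_1$ and $H\to E_2$ have non-rational quotients, and conclude that $H$ is not hyperelliptic. But this is not a valid contrapositive: a hyperelliptic curve can certainly carry additional involutions with non-rational quotients, so exhibiting $\iota_1$ and $\iota_2$ does not rule out a \emph{separate} hyperelliptic involution~$\sigma$. To close the gap one must argue further --- for instance, use that $\sigma$ is central, so $\langle\sigma,\iota_1,\iota_2\rangle\cong(\bbZ/2\bbZ)^3$ (since $\sigma$ cannot equal $\iota_1$, $\iota_2$, or $\iota_1\iota_2$, whose quotients have genera $1,1,2$), and then chase the genus relation $g_H=g_1+g_2+g_3-2g_X$ through the various $V_4$-subdiagrams until a Riemann--Hurwitz contradiction appears. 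This can be done, but it is genuine extra work, not the one-liner you present.

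The paper takes a different and more direct route: it writes down an explicit model of $H$ in $\bbP^3$ as the intersection of the quadric $z^2-w^2=q(x,y)$ and the cubic $z^2y=f_1^{(\rm h)}(x,y)$, checks directly that this is nonsingular (using that $f_1,f_2$ are separable and coprime), and then cites the standard fact that a smooth $(2,3)$-complete intersection in $\bbP^3$ is a canonical curve of genus~$4$. This yields the genus and the canonicality simultaneously, with no separate non-hyperellipticity step.
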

\begin{proof}
Let $H$ be a Howe curve, normalized as above so that it is given as the fiber product
of $w^2 = f_1$ and $z^2 = f_2$ for coprime separable monic cubic polynomials $f_1$ and~$f_2$.
For each $i$ let $f_i^{(\rm h)}\coloneqq y^3 f_i(x/y)\in K[x,y]$ be the homogenous cubic obtained
from $f_i$, and let $H'$ be the curve defined in $\bbP^3=\Proj K[x,y,z,w]$ by
\begin{align*}
z^2-w^2 &= q(x,y)\\ 
z^2y &= f_1^{(\rm h)}(x,y), 
\end{align*}
where $q(x,y)$ is the quadratic form
\[
q(x,y) = (f_1^{(\rm h)}(x,y)-f_2^{(\rm h)}(x,y))/y.
\]
Note that $H'$ and $E_1\times_{\bbP^1} E_2$ are isomorphic if
the locus $y=0$ is excluded.
It is straightforward to see that $H'$ is nonsingular,
since $f_1$ and $f_2$ are separable and are coprime.
Hence $H$ and $H'$ are isomorphic to one another (cf.\,\cite[Proposition~II.2.1]{Sil}).

It is well-known that any nonsingular curve defined by
a quadratic form and a cubic form in $\bbP^3$
is a canonical curve of genus $4$ \cite[Example~IV.5.2.2]{Har}.
\end{proof}

To study the superspeciality of Howe curves, we first look at the decomposition of
their Jacobians. Let $f_1$ and $f_2$ be coprime separable monic cubic polynomials, as above.
Let $f = f_1 f_2$ and consider the hyperelliptic curve $C$ of genus $2$ defined by
$u^2 = f.$
By~\cite[Theorem~2.1]{Howe}, there exist two isogenies
\begin{align*}
\varphi&\colon  J(H) \longrightarrow E_1 \times E_2 \times J(C)\\
\psi&\colon E_1 \times E_2 \times J(C) \longrightarrow J(H)
\end{align*}
such that $\varphi\circ\psi$ and $\psi\circ\varphi$ are both multiplication by $2$.

Suppose now that the characteristic $p$ of $K$ is an odd prime.
Then 
$\psi\circ\varphi$ is an automorphism of the $p$-kernel of $J(H)$
and
$\varphi\circ\psi$ is an automorphism of the $p$-kernel of $E_1 \times E_2 \times J(C)$,
so $J(H)[p]$ and $E_1[p] \times E_2[p] \times J(C)[p]$ are isomorphic. 
Hence $H$ is superspecial if and only if $E_1$ and $E_2$ are supersingular and $C$ is superspecial. 

Now we recall a criterion for the superspeciality of $C$.
Let $\gamma_i$ be the coefficient of $x^i$ in $f^{(p-1)/2}$, and set
\[
a=\gamma_{p-1},\quad
b=\gamma_{2p-1},\quad
c=\gamma_{p-2}\quad \text{and}\quad
d=\gamma_{2p-2}.
\]
Let $M$ be the matrix
\begin{equation}\label{Cartier-Manin matrix}
M \coloneqq \begin{pmatrix}
a^p & c^p \\
b^p & d^p
\end{pmatrix}.
\end{equation}
Then $M$ is a Cartier--Manin matrix for $C$, that is, there is a basis for 
$H^0(C,\Omega_C^1)$ so that left multiplication by $M$ represents the (semi-linear)
action of the Cartier operator; here $\Omega_C^1$ is the sheaf of differential $1$-forms on $C$.
(For information about Cartier--Manin matrices, see~\cite{AH}, which addresses issues with 
earlier literature, including the standard reference~\cite[\S 2]{Yui}.)

\begin{lemma}\label{lem:ssp}
Let $H$ be a Howe curve as above.
Then $H$ is superspecial if and only if $E_1$ and $E_2$ are supersingular and $a=b=c=d=0$.
\end{lemma}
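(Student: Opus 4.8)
The plan is to assemble the ingredients already in hand. As noted just before the statement, the isomorphism $J(H)[p]\cong E_1[p]\times E_2[p]\times J(C)[p]$ --- which comes from the isogenies $\varphi$ and $\psi$, whose composites are multiplication by $2$ and hence invertible on $p$-torsion since $p$ is odd --- shows that $H$ is superspecial if and only if $E_1$ and $E_2$ are supersingular and the genus-$2$ curve $C$ is superspecial. So the whole task reduces to re-expressing the superspeciality of $C$ as the condition $a=b=c=d=0$.

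For that last step I would invoke the criterion of Ibukiyama, Katsura, and Oort \cite[Lemma~1.1(i)]{IKO}: a nonsingular curve is superspecial if and only if its Cartier--Manin matrix is the zero matrix. Since the matrix $M$ written down above has already been identified as a Cartier--Manin matrix for $C$, this immediately gives that $C$ is superspecial if and only if $M=0$. To finish, I would observe that $K$ is a field, hence an integral domain, so the Frobenius map $t\mapsto t^p$ on $K$ is injective; as every entry of $M$ is the $p$-th power of one of $a$, $b$, $c$, $d$, it follows that $M=0$ if and only if $a=b=c=d=0$. Stringing these three equivalences together proves the lemma.

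There is no genuinely hard step here: the statement is a repackaging of facts recalled earlier. The one point to treat with a little care is to confirm that the condition ``the Cartier--Manin matrix is zero'' is well posed. Changing the basis of $H^0(C,\Omega_C^1)$ used to define $M$ conjugates $M$ in a semilinear fashion by an invertible matrix, and this does not affect whether it vanishes; hence the vanishing condition is basis-independent, and the appeal to \cite[Lemma~1.1(i)]{IKO} is legitimate no matter which normalization of the Cartier--Manin matrix one adopts --- a point worth keeping in mind given the subtleties flagged in \cite{AH}.
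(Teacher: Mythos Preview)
Your argument is correct and follows essentially the same route as the paper: reduce to the superspeciality of $C$ via the $(2,2)$-isogeny, then invoke the standard criterion that superspeciality is equivalent to vanishing of the Cartier operator. The only difference is cosmetic --- the paper cites \cite[Theorem~4.1]{Nygaard} rather than \cite[Lemma~1.1(i)]{IKO}, and leaves the implication ``$M=0\iff a=b=c=d=0$'' implicit --- but your added remarks on Frobenius injectivity and basis independence are entirely in order.
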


\begin{proof}
We already noted that $H$ is superspecial if and only if $E_1$ and $E_2$
are supersingular and $C$ is superspecial. But $C$ is 
superspecial if and only if the Cartier operator acts trivially on $H^0(C,\Omega_C^1)$
\cite[Theorem~4.1]{Nygaard}.
\end{proof}

\section{Detecting isomorphisms of Howe curves}\label{sec:IsomTest}
In this section, we give an efficient criterion for determining
whether two Howe curves are isomorphic or not.
This criterion will be used in both the first and the second approach
to enumerating superspecial Howe curves over a finite field.

We continue to work over an algebraically closed field of characteristic $p\neq 2$.
Recall from Section \ref{sec:Howe} that a Howe curve is the desingularization of
the fiber product of two genus-$1$ double covers of $\bbP^1$, where the 
ramification loci of the two covers overlap in exactly one point. This means 
that a Howe curve is precisely a genus-$4$ curve $H$ that fits into a $V_4$-diagram
of the following form, where $C$ is a curve of genus~$2$ and $E_1$ and $E_2$ are curves of genus $1$:
\[
\xymatrix@=3ex{
&&H\ar[lld]\ar[rrd]\ar[d]&&\\
E_1\ar[rrd] & &C\ar[d] & &E_2\ar[lld]\\
&&\bbP^1\rlap{.}&&
}
\]

If $E_1\to\bbP^1$ ramifies at points $P$, $Q_1$, $Q_2$, and $Q_3$, and 
if $E_2\to\bbP^1$ ramifies at $P$, $R_1$, $R_2$, and $R_3$, then the Weierstrass
points of $C$ are the points lying over $Q_1$, $Q_2$, $Q_3$, $R_1$, $R_2$, and $R_3$. 
On the other hand, the point $P$ splits in the cover $C\to\bbP^1$,
and we let $P_1$ and $P_2$ be the points of $C$ lying over~$P$.

Thus, to specify a Howe curve, it is enough to provide three pieces of information:
\begin{enumerate}
\item A genus-$2$ curve $C$.
\item An unordered pair of disjoint sets $\{W_1, W_2\}$, each consisting of
      three Weierstrass points of $C$.
\item An unordered pair of distinct points $\{P_1, P_2\}$ on $C$ that are mapped to
      one another by the hyperelliptic involution.
\end{enumerate}      

These three things determine the $V_4$-diagram above, and hence also determine
the double cover $\eta\colon H\to C$, which we call the \emph{structure map} for the given data.
Of course, if $\alpha$ is an automorphism of $C$ then $\{\alpha(W_1), \alpha(W_2)\}$ and 
$\{\alpha(P_1), \alpha(P_2)\}$ will give us a double cover $H\to C$ that is isomorphic to $\eta$,
namely $\alpha \eta$.

\begin{lemma}
\label{L:data}
The data specifying a Howe curve is recoverable \textup(up to automorphisms of $C$\textup)
just from the structure map $\eta\colon H\to C$.
\end{lemma}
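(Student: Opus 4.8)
The plan is to recover, from the abstract data of the degree-$2$ morphism $\eta\colon H\to C$ alone, the three pieces of information $(C,\{W_1,W_2\},\{P_1,P_2\})$ up to the action of $\Aut(C)$. The curve $C$ itself is of course the target of $\eta$, so nothing needs to be done there. The content is to reconstruct the unordered pair of Weierstrass triples and the unordered pair of hyperelliptic-conjugate points that were used to build the $V_4$-diagram. First I would observe that $\eta$ is a connected unramified double cover everywhere except over the six Weierstrass points that lie in $W_1\cup W_2$, where it is ramified, together with possibly the two points $P_1,P_2$ — but in fact $\eta$ is \emph{unramified} over $P_1$ and $P_2$, since in the $V_4$-diagram the point $P$ splits in $C\to\bbP^1$ and the two points of $C$ above it are precisely the non-Weierstrass points that behave differently from generic points. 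So the branch locus of $\eta$ is exactly $W_1\cup W_2$, a set of six Weierstrass points, and this immediately recovers the set $W_1\cup W_2$ from $\eta$.

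Next I would recover the partition of these six points into $\{W_1,W_2\}$ and recover $\{P_1,P_2\}$ using the intermediate quotients. The Howe curve carries a $V_4 = (\bbZ/2\bbZ)^2$-action whose three order-$2$ subgroups give the three quotient maps $H\to E_1$, $H\to C$, $H\to E_2$ in the diagram. The double cover $\eta$ is the quotient by one of these involutions, say $\sigma_C$; the Galois group of the Galois closure of $H\to\bbP^1$ supplies the other two involutions $\sigma_1,\sigma_2$ with $\sigma_1\sigma_2=\sigma_C$. Now I would argue that $\sigma_1,\sigma_2$ are intrinsic to $\eta$: $\sigma_C$ acts on $H$, hence on $\Jac(H)$, and the $(-1)$-eigenspace decomposes further under the remaining automorphisms; concretely, pushing forward the hyperelliptic involution $\iota_C$ of $C$, the deck transformation $\sigma_C$ together with the lift of $\iota_C$ to $H$ generate the group that recovers $E_1$ and $E_2$ as the two other quotients. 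From $E_i$ one reads off $W_i$ as the image in $C$ of the branch points of $H\to E_i$ that are not the branch point $P$, and one reads off $\{P_1,P_2\}$ as the fibre of $\eta$ over the remaining branch point. Thus both the partition and the pair of points are determined.

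The step I expect to be the main obstacle is showing that the further $V_4$-structure — i.e., the involutions $\sigma_1,\sigma_2$ refining $\sigma_C$ — is \emph{canonically} determined by $\eta$ and is not an extra choice. A clean way to do this is to use the hyperelliptic involution of $C$: it pulls back under $\eta$ to an automorphism $\tilde\iota$ of $H$, and $\langle \sigma_C,\tilde\iota\rangle$ is a group of order $4$ acting on $H$; identifying its remaining two non-trivial elements with $\sigma_1$ and $\sigma_2$ and checking that the corresponding quotients have genus $1$ (so that they are the curves $E_1,E_2$) pins everything down. The genus computation is a Riemann--Hurwitz count: each $\sigma_i$ has exactly four fixed points on $H$ — the two ramification points of $\eta$ lying in $W_i$ get merged appropriately, together with the two points over $P$ — giving genus $1$, as required. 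Once this intrinsic $V_4$-diagram is in hand, the recovery of $\{W_1,W_2\}$ and $\{P_1,P_2\}$ is immediate from the branch loci, and the only remaining ambiguity is an automorphism of $C$, exactly as claimed. I would close by noting that conversely applying $\alpha\in\Aut(C)$ to the data changes $\eta$ only by postcomposition with $\alpha$, so the recovered data is well-defined precisely up to $\Aut(C)$.
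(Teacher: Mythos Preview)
Your overall strategy --- compose $\eta$ with the canonical hyperelliptic map $C\to\bbP^1$ to recover the full $V_4$-diagram, then read off the partition and the distinguished pair from the intermediate quotients --- is exactly the paper's approach, and once stated correctly the proof is only a few lines. But your execution contains a genuine error in the ramification analysis that makes the proposal incorrect as written.

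You claim that the branch locus of $\eta\colon H\to C$ is the six Weierstrass points $W_1\cup W_2$ and that $\eta$ is unramified over $P_1,P_2$. This is backwards. Riemann--Hurwitz already rules it out: with $g(H)=4$, $g(C)=2$, and $\deg\eta=2$ one gets $6=2\cdot 2+r$, so $r=2$, not $6$. Concretely, writing $E_1\colon w^2=f_1$, $E_2\colon z^2=f_2$, $C\colon u^2=f_1f_2$, the map $\eta$ sends $(x,w,z)\mapsto(x,wz)$; over a Weierstrass point of $C$ lying above a root of $f_1$, say, one has $w=0$ but $z^2=f_2(x_0)\neq 0$ still has two solutions, so the fibre has two points and $\eta$ is \emph{unramified} there. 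The two ramification points lie over $P_1,P_2$ (the two points of $C$ above $\infty$). This is exactly what the paper uses: ``The pair of points $\{P_1,P_2\}$ is simply the set of ramification points of $\eta$.'' Your later fixed-point count is off for the same reason: each $\sigma_i$ (with quotient $E_i$ of genus $1$) has $6$ fixed points on $H$, not $4$, again by Riemann--Hurwitz ($6=2\cdot 0+r$).

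Once you correct the branch locus, the argument collapses to the paper's: the hyperelliptic map $C\to\bbP^1$ is canonical (up to $\Aut\bbP^1$), so $\eta$ determines the composite $H\to C\to\bbP^1$; this composite is Galois with group $V_4$, so its two genus-$1$ intermediate quotients are $E_1,E_2$, which gives the partition $\{W_1,W_2\}$; and $\{P_1,P_2\}$ is the branch locus of $\eta$. There is no need to lift $\iota_C$ explicitly or to do a separate Riemann--Hurwitz check on the $\sigma_i$.
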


\begin{proof}
The map $C\to\bbP^1$ is unique (up to automorphism of $\bbP^1$), so we recover the entire map 
$H\to C\to \bbP^1$ from $\eta$. This map is a Galois extension with group~$V_4$, 
so we recover the genus-$1$ curves in the extension, and hence the division of
the Weierstrass points of $C$. The pair of points $\{P_1, P_2\}$ is simply the set
of ramification points of $\eta$.
\end{proof}

\begin{theorem}
Two structure maps $\eta_1\colon H \to C_1$ and $\eta_2\colon H\to C_2$ starting from the 
same Howe curve $H$ are isomorphic to one another. That is, there is an isomorphism 
$\gamma\colon C_1\to C_2$ and an automorphism $\delta\colon H\to H$ such that the
following diagram commutes\textup{:}
\[
\xymatrix@C=4ex{
H\ar[rr]^\delta\ar[d]_(0.4){\eta_1}&&H\ar[d]^(0.4){\eta_2}\\
C_1\ar[rr]^\gamma && C_2\rlap{.}
}
\]
\end{theorem}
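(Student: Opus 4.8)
The plan is to reduce the statement to Lemma~\ref{L:data} by extracting, from each structure map $\eta_i\colon H\to C_i$, the genus-$4$ curve $H$ together with its intrinsic $V_4$-structure, and then to observe that this intrinsic structure is canonically attached to $H$ alone, not to the particular presentation. Concretely, each $\eta_i$ exhibits $H$ as a double cover of a genus-$2$ curve $C_i$; composing with the hyperelliptic map $C_i\to\bbP^1$ yields a degree-$4$ map $H\to\bbP^1$ which, as in the proof of Lemma~\ref{L:data}, is Galois with group $V_4$. So both $\eta_1$ and $\eta_2$ determine $V_4$-subextensions of the function field $K(H)$, and the task is to show these two copies of $V_4$ inside $\Aut$ of (a Galois closure over) $K(H)$ can be matched up by an automorphism $\delta$ of $H$.

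First I would make precise what ``the'' $V_4$-structure on $H$ is. A Howe curve $H$ admits a group $G\cong V_4$ acting on it (coming from the fiber-product description: the two hyperelliptic-type involutions of $E_1\times_{\bbP^1}E_2$ and their product); the three intermediate quotients are $C$, $E_1$, $E_2$, and $H/G=\bbP^1$. The content of Lemma~\ref{L:data} is that from $\eta\colon H\to C$ one recovers this $G$ together with the labelling of which quotient is the genus-$2$ one. Thus the real claim to establish is: \emph{the subgroup $G\subset\Aut(H)$ arising this way is independent of the chosen structure map}, i.e.\ $\eta_1$ and $\eta_2$ give rise to the same $G\subseteq\Aut(H)$. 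Granting this, set $\delta=\mathrm{id}_H$ if the two data sets (division of Weierstrass points and the pair $\{P_1,P_2\}$) literally coincide; in general one uses an element of $\Aut(H)$ permuting them appropriately, and $\gamma$ is the induced isomorphism on quotients $C_1=H/\langle\iota_1\rangle \to H/\langle\iota_2\rangle = C_2$, where $\iota_i$ is the involution with quotient $C_i$.

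To prove the independence of $G$, I would argue as follows. Any double cover $\eta\colon H\to C$ onto a genus-$2$ curve is given by an involution $\iota\in\Aut(H)$ with $H/\langle\iota\rangle\cong C$; by Riemann--Hurwitz such an $\iota$ has exactly two fixed points (the points $P_1,P_2$), since $4 - 2 = 2\cdot(2\cdot 2 - 2) + (\text{ramification})$ forces the ramification divisor to have degree $2$. So $\eta_1$ and $\eta_2$ correspond to two involutions $\iota_1,\iota_2\in\Aut(H)$, each with exactly two fixed points. I would then show that the composite degree-$4$ map $H\to C_i\to\bbP^1$ being $V_4$-Galois pins down $\iota_i$ as one of the three non-trivial elements of a $V_4\subset\Aut(H)$ whose other two quotients have genus $1$ — and that there is (generically, and after a separate finite check) a \emph{unique} such $V_4$ in $\Aut(H)$, namely the one from the fiber-product structure. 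Then $\iota_1$ and $\iota_2$ both lie in this common $G$, and an automorphism of $H$ normalizing $G$ and carrying $\iota_1$ to $\iota_2$ (which exists because $G$ is abelian, so $\iota_1,\iota_2$ are conjugate in $G\rtimes(\text{its automorphisms realized in }\Aut H)$, or more simply because the data in (2)–(3) for the two structure maps are related by an automorphism of $C$ as noted just before Lemma~\ref{L:data}) furnishes $\delta$, with $\gamma$ the descent of $\delta$ to the quotients.

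The main obstacle I anticipate is the uniqueness of the $V_4$-structure: for special Howe curves with extra automorphisms, $\Aut(H)$ could contain several subgroups isomorphic to $V_4$ with the right quotient genera, and one must check that passing from one to another is still realized by an honest automorphism $\delta$ of $H$ making the square commute — equivalently, that any two ``Howe structures'' on a fixed $H$ are conjugate under $\Aut(H)$. I would handle this by going back to the defining equations $w^2=f_1$, $z^2=f_2$: the genus-$1$ quotients of $H$ inside the Galois $V_4$-cover of $\bbP^1$ are exactly $E_1$ and $E_2$, the Weierstrass points of the corresponding $C$ split as the ramification of $E_1\to\bbP^1$ versus that of $E_2\to\bbP^1$ minus the common point, and any reordering of these is induced by an automorphism of $\bbP^1$ lifting (after the standard normalization sending the common point to $\infty$) to an automorphism of $H$. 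This turns the abstract matching problem into the concrete statement that two normalizations of the same Howe curve differ by such a lift, which is routine once set up.
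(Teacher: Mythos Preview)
Your outline correctly identifies the crux of the matter: the theorem is equivalent to showing that the two involutions $\iota_1,\iota_2\in\Aut(H)$ with genus-$2$ quotient (coming from $\eta_1,\eta_2$) are conjugate in $\Aut(H)$. You also correctly flag the main obstacle, namely that a Howe curve with extra automorphisms could in principle carry several $V_4$-subgroups with the right genus profile. The gap is that your proposed resolution of this obstacle does not actually resolve it.

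Your final paragraph argues within a \emph{single} $V_4$-cover $H\to\bbP^1$: once you have written $H$ as $w^2=f_1$, $z^2=f_2$, the two genus-$1$ quotients are $E_1,E_2$, the Weierstrass locus of $C$ splits accordingly, and automorphisms of $\bbP^1$ lift. But this only shows that two presentations \emph{living in the same $V_4\subset\Aut(H)$} are related by an automorphism of $H$---which is the easy direction already recorded before Lemma~\ref{L:data}. It says nothing when $\eta_2$ corresponds to a $V_4$-subgroup $U_2$ that is not the one $U_1$ you used to write down the equations; in that case $\iota_2$ need not act on your model as any visible symmetry of $(f_1,f_2)$, and there is no ``reordering'' to speak of. Similarly, your remark that ``$\iota_1,\iota_2$ are conjugate in $G\rtimes(\ldots)$'' presupposes that $\iota_2$ already lies in the same $G$ as $\iota_1$, which is exactly what is at issue. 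So the argument is circular at the key step.

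The paper closes this gap by a quite different and substantially harder argument: it places both $V_4$-subgroups $U_1,U_2$ inside a common $2$-Sylow subgroup $S$ of $\Aut(H)$, invokes Henn's bound to get $|S|<512$, and then performs a computer-assisted exhaustive search over all $2$-groups of order ${<}\,512$, using the Kani--Rosen genus relation on $V_4$-subdiagrams and Igusa's classification of $2$-groups acting on genus-$2$ curves to rule out, for every such $S$, the existence of two non-conjugate involutions each sitting in a $V_4$ with the Howe genus pattern. The one surviving case $S\cong(\bbZ/2\bbZ)^3$ is then eliminated by a direct genus contradiction. Nothing in your sketch substitutes for this step; the phrase ``generically, and after a separate finite check'' hides precisely the content of the theorem.
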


\begin{proof}
Let $U_1$ and $U_2$ be the $V_4$-subgroups of $\Aut H$ specified by $\eta_1$ and $\eta_2$,
and let $S$ be the $2$-Sylow subgroup of $\Aut H$ that contains $U_1$.
By conjugating $U_2$ by an automorphism $\delta$ (and thereby replacing $\eta_2$
with $\eta_2 \delta$) we may assume that $U_2$ is also contained in~$S$. 
Let $\alpha_1$ and $\alpha_2$ be the involutions of $H$ corresponding to the
double covers $\eta_1$ and $\eta_2$, and for each $i$ let $\beta_i$ and $\gamma_i$
be the other nonzero elements of $U_i$.

If $\alpha_1$ and $\alpha_2$ are conjugate to one another in $S$ (or even in $\Aut H$), we are done. 
So assume, to get a contradiction, that $\alpha_1$ and $\alpha_2$ lie in different conjugacy classes of~$S$.

We know that the quotient of $H$ by the subgroup $\langle\alpha_i\rangle$ has genus~$2$,
while the quotients of $H$ by $\langle\beta_i\rangle$ and by $\langle\gamma_i\rangle$ have
genus~$1$. The same is true for all of the conjugates of $\alpha_i$, $\beta_i$, and 
$\gamma_i$ in $S$. Moreover, if we have any $V_4$-subgroup of $S$, giving rise to
a diagram
\begin{equation}
\begin{split}
\label{EQ:V4}
\xymatrix@=3ex{
&&H\ar[lld]\ar[rrd]\ar[d]&&\\
Y_1\ar[rrd] & &Y_2\ar[d] & &Y_3\ar[lld]\\
&&X\rlap{,}&&
}
\end{split}
\end{equation}
then none of the curves $Y_i$ can have genus~$0$ (by Lemma~\ref{HoweIsCanonical}),
and so the only possibilities are that (a) all of the $Y_i$ have genus~$2$ and $X$ has genus~$1$,
or (b) one of the $Y_i$ has genus~$2$, the other two have genus~$1$, and $X$ has genus~$0$.
(This follows from the fact that in any diagram such as~\eqref{EQ:V4}, the genus of $H$
is the sum of the genera of the $Y_i$ minus twice the genus of $X$; this follows, for
instance, from~\cite[Theorem~B]{KR}.)
Thus, given two commuting involutions in $S$, if we know the genera of the quotients
of $H$ they produce, we can deduce the genus of the quotient of $H$ by their product.

Our strategy, then, will be to enumerate all possible $2$-groups $S$ that occur as the
$2$-Sylow subgroup of the automorphism group of a non-hyperelliptic curve~$H$ of genus~$4$,
along with all possible pairs $U_1$ and $U_2$ of $V_4$-subgroups of $S$ that contain
elements $\alpha_1$ and $\alpha_2$ that are not conjugate in~$S$. We will assume that 
$\alpha_1$ and $\alpha_2$ generate genus-$2$ curves, while the other involutions in $U_1$ and $U_2$
generate genus-$1$ curves. Given these assumptions, we deduce, for as many involutions as we can, the
genera of the curves associated to these involutions. 

Suppose $\delta$ is an involution in $S$ for which we know that the quotient ${Y\coloneqq H/\langle\delta\rangle}$
has genus~$2$. Let $T$ be the centralizer of $\delta$ in $S$.
Then the quotient $T/\langle\delta\rangle$ is contained in the automorphism group of the genus-$2$ curve $Y$. 
Using Igusa's classification of the automorphism groups of genus-$2$ curves~\cite[\S8]{Igusa},
we can show that there are only eight $2$-groups that appear as subgroups of the automorphism
groups of genus-$2$ curves. If $T/\langle\delta\rangle$ is not one of these groups, then we have shown that the values
of $U_1$, $U_2$, $\alpha_1$, and $\alpha_2$ cannot correspond to two different realizations
of $H$ as a Howe curve.

In order to use this strategy, we need a good bound on the sizes of automorphism groups
of non-hyperelliptic curves of genus $4$ in characteristic not~$2$.
A result of Henn~\cite[Satz~1]{Henn} (see also~\cite{GK}) shows
that in characteristic $p>2$, the order of the automorphism group of a curve 
of genus~$g$ is strictly less than $8g^3$, except possibly when the curve is of one of
the following types:
\begin{enumerate}
\item $x^n + y^m = 1$, where $n = 1 + p^a$ for some $a>0$ and $m\mid n$, or
\item $y^p - y = x^n$, where $n = 1 + p^a$ for some $a>0$.
\end{enumerate}
The first type of curve has genus $(n-2)(m-1)/2$, and if this is equal to $4$
then either we have $n=10$ and $m=2$ (and $p=3$) or we have $n=6$ and $m=3$ (and $p=5$). 
In the first case the curve is hyperelliptic; in the second case, as Henn
notes, the automorphism group has order $360$, which is less than $8g^3$.
The second type of curve has genus $p^a (p-2)/2$, which is never equal to~$4$.
Thus, it will suffice for us to look at every $2$-group $S$ of order less than $8\cdot 4^3 = 512$.

We implemented this computation in Magma; the code is available on the third author's web site.
We ran our code on all $2$-groups of order less than $512$, and the only group not eliminated was 
$S\cong(\bbZ/2\bbZ)^3.$

For this $S$, our computation shows that of the seven involutions in $S$, three
give genus-$2$ quotients and four give genus-$1$ quotients, and the three elements
that give genus-$2$ quotients sum to zero. Now consider the seven $V_4$-subgroups $T$ of $S$.
Each such $T$ gives us a diagram like~\eqref{EQ:V4} above. 
For the $T$ that contains the three genus-$2$ involutions, the genus of $H/T$ is~$1$,
while for the other six $V_4$-subgroups $T$, the genus of $H/T$ is~$0$.

Let us consider the diagram of subextensions between $H$ and its quotient $H/S\cong \bbP^1$.
We label the elements of $S$ by vectors in $\bbF_2^3$, and we label the $V_4$-subgroups in the
same way, with the convention that a $V_4$-subgroup labeled by $v$ contains the elements with labels $g$
such that the dot product of $v$ and $g$ is~$0$. Then the diagram of subextensions,
with their genera, is as follows:
\[
\xymatrix@=3ex{
&&&\genfrac{}{}{0pt}{}{H}{\textup{genus 4}}\ar@{-}[llld]\ar@{-}[lld]\ar@{-}[ld]\ar@{-}[d]\ar@{-}[rd]\ar@{-}[rrd]\ar@{-}[rrrd]\\
\genfrac{}{}{0pt}{}{001}{\textup{genus 2}}&
\genfrac{}{}{0pt}{}{010}{\textup{genus 2}}&
\genfrac{}{}{0pt}{}{011}{\textup{genus 2}}&
\genfrac{}{}{0pt}{}{100}{\textup{genus 1}}&
\genfrac{}{}{0pt}{}{101}{\textup{genus 1}}&
\genfrac{}{}{0pt}{}{110}{\textup{genus 1}}&
\genfrac{}{}{0pt}{}{111}{\textup{genus 1}}\\
\genfrac{}{}{0pt}{}{001}{\textup{genus 0}}&
\genfrac{}{}{0pt}{}{010}{\textup{genus 0}}&
\genfrac{}{}{0pt}{}{011}{\textup{genus 0}}&
\genfrac{}{}{0pt}{}{100}{\textup{genus 1}}&
\genfrac{}{}{0pt}{}{101}{\textup{genus 0}}&
\genfrac{}{}{0pt}{}{110}{\textup{genus 0}}&
\genfrac{}{}{0pt}{}{111}{\textup{genus 0}}\\
&&&\genfrac{}{}{0pt}{}{\bbP^1}{\textup{genus 0}}\ar@{-}[lllu]\ar@{-}[llu]\ar@{-}[lu]\ar@{-}[u]\ar@{-}[ru]\ar@{-}[rru]\ar@{-}[rrru]&&&\\
}
\]
(For visual clarity, we have left off the heads of the arrows, and omitted the $21$ arrows between the middle layers.)
But this configuration of genera is not possible; consider for example the following subdiagram:
\[
\xymatrix@=3ex{
&\genfrac{}{}{0pt}{}{100}{\textup{genus 1}}\ar@{-}[ld]\ar@{-}[d]\ar@{-}[rd]&\\
\genfrac{}{}{0pt}{}{001}{\textup{genus 0}}&
\genfrac{}{}{0pt}{}{010}{\textup{genus 0}}&
\genfrac{}{}{0pt}{}{011}{\textup{genus 0}}\\
&\genfrac{}{}{0pt}{}{\bbP^1}{\textup{genus 0}}\ar@{-}[lu]\ar@{-}[u]\ar@{-}[ru]&\\
}
\]
This diagram violates the genus property we mentioned below diagram~\eqref{EQ:V4}.

This contradiction shows that the involutions $\alpha_1$ and $\alpha_2$ corresponding
to the structure maps $\eta_1$ and $\eta_2$ lie in the same conjugacy class of $\Aut H$, so
that $\eta_1 = \eta_2\delta$ for an automorphism $\delta$ of $H$.
\end{proof}

\begin{corollary}
\label{C:isomorphism}
Two triples $(C, \{W_1, W_2\}, \{P_1, P_2\})$
and $(C', \{W_1', W_2'\}, \{P_1', P_2'\})$ give isomorphic Howe curves
if and only if there is an isomorphism $C \to C'$ that takes 
$\{W_1, W_2\}$ to $\{W_1', W_2'\}$ and $\{P_1, P_2\}$ to $\{P_1', P_2'\}$. \qed
\end{corollary}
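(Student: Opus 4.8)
The plan is to derive the corollary formally from Lemma~\ref{L:data} and the theorem just proved, using the dictionary set up in this section between a triple $(C,\{W_1,W_2\},\{P_1,P_2\})$ and the structure map $\eta\colon H\to C$ that it determines. The point to keep in mind is that a triple determines its structure map on the nose, while by Lemma~\ref{L:data} a structure map determines its triple only up to an automorphism of the target genus-$2$ curve; moreover, precomposing a structure map by an automorphism of its source changes neither the partition of the Weierstrass points nor the unordered pair of ramification points that it recovers on the target.

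For the ``if'' direction I would argue as follows. An isomorphism $\sigma\colon C\to C'$ carrying $\{W_1,W_2\}$ to $\{W_1',W_2'\}$ and $\{P_1,P_2\}$ to $\{P_1',P_2'\}$ is automatically compatible with the canonical degree-$2$ maps $C\to\bbP^1$ and $C'\to\bbP^1$, so it carries the genus-$1$ double cover of $\bbP^1$ associated to the pair $(W_1,\{P_1,P_2\})$ to the one associated to $(W_1',\{P_1',P_2'\})$, and likewise for $W_2$; taking desingularized fiber products over $\bbP^1$ then produces an isomorphism $H\to H'$. Equivalently, $\sigma$ carries the structure map attached to the first triple to the one attached to the second, so the two Howe curves are isomorphic.

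For the ``only if'' direction I would start from an isomorphism $\phi\colon H\to H'$ and consider the composite $\eta'\circ\phi\colon H\to C'$; this is again a structure map starting from $H$, exactly as $\eta\colon H\to C$ is. Applying the preceding theorem to this pair furnishes an isomorphism $\gamma\colon C\to C'$ and an automorphism $\delta\colon H\to H$ with $\gamma\circ\eta=\eta'\circ\phi\circ\delta$. By Lemma~\ref{L:data}, the data recovered from $\eta$ is the pair $(\{W_1,W_2\},\{P_1,P_2\})$ up to $\Aut C$, the data recovered from $\eta'$ is $(\{W_1',W_2'\},\{P_1',P_2'\})$ up to $\Aut C'$, and the data recovered from $\eta'\circ\phi\circ\delta$ agrees with that recovered from $\eta'$; since $\gamma$ intertwines $\eta$ with $\eta'\circ\phi\circ\delta$, it intertwines the recovered data, and after absorbing the ambient automorphisms of $C$ and $C'$ it carries $\{W_1,W_2\}$ to $\{W_1',W_2'\}$ and $\{P_1,P_2\}$ to $\{P_1',P_2'\}$.

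The one place that needs care — and the step I expect to be the main, if mild, obstacle — is the bookkeeping of the ``up to automorphism of $C$'' clauses: one must check that the automorphisms of $C$ and of $C'$ used to match the recovered data against the given triples can be composed into $\gamma$ (with a corresponding adjustment of $\delta$) without spoiling the commuting square, so that the output is genuinely an isomorphism $C\to C'$ of the form demanded by the corollary. This is routine once the correspondence ``triple $\leftrightarrow$ structure map, up to $\Aut C$'' is stated precisely, and no geometric input is needed beyond Lemma~\ref{L:data} and the preceding theorem.
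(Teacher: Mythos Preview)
Your argument is correct and follows the route the paper intends: the corollary is marked with a \qed\ and no separate proof, so the authors view it as an immediate consequence of Lemma~\ref{L:data} together with the preceding theorem, exactly the two ingredients you invoke.

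One small simplification: the bookkeeping you flag as ``the one place that needs care'' is not actually needed. From a \emph{fixed} structure map $\eta\colon H\to C$ the partition $\{W_1,W_2\}$ and the pair $\{P_1,P_2\}$ are recovered on the nose (the ramification locus of $\eta$ is determined by $\eta$, and the $V_4$-diagram over $\eta$ is unique); the ``up to $\Aut C$'' clause in Lemma~\ref{L:data} only reflects that two triples differing by an automorphism of $C$ yield \emph{isomorphic} (not identical) structure maps. Hence once the theorem hands you $\gamma\colon C\to C'$ with $\gamma\circ\eta=\eta'\circ\phi\circ\delta$, the map $\gamma$ already sends the ramification pair of $\eta$ to that of $\eta'$ and the Weierstrass partition of $\eta$ to that of $\eta'$, with no further adjustment required.
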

This isomorphism test is very fast; it simply requires determining whether there are any automorphisms of $\bbP^1$ that
respect the sets of Weierstrass points and their divisions, and that take the $x$-coordinate of $P_1$ and $P_2$ to that of $P_1'$ and $P_2'$.

\if0
We note that the isomorphism test obtained from this corollary is 
very fast: We simply check whether any of the six linear fractional transformations
that take the points of $W_1$ to those of $W_1'$ also take the points of $W_2$ to 
those of $W_2'$, and, if so, whether they take the $x$-coordinate of $P_1$ and $P_2$
to that of $P_1'$ and $P_2'$; then we repeat, with the roles of $W_1'$ and $W_2'$ 
reversed. Even in the worst case, this can be carried out with well under $1000$ multiplications,
and is faster than computing the Igusa invariants of the the curves $C$ and $C'$.
\fi

\section{First approach: Reduction to solving multivariate systems}\label{sec:first}
In this section and the next,
we present two approaches to solving the problem of enumerating superspecial Howe curves.
As we mentioned in Section~\ref{sec:intro},
the first approach, described in this section, enumerates pairs of supersingular elliptic curves $E_1\colon w^2 = f_1$ and $E_2\colon z^2 = f_2$ such that $C\colon y^2=f_1f_2$ is superspecial.
For this, we shall apply a construction of Howe curves given in \cite{KHS20}.
While this construction is different from the original one of \cite{Howe}, it can easily reduce our problem to finding roots of polynomial systems.

\subsection{Reduction to solving multivariate systems over finite fields}\label{subsec:red}
Let $K$ be an algebraically closed field in characteristic $p > 3$.
In \cite{KHS20}, the authors parametrize the space of all Howe curves by the projective plane $\bbP^2$.
We here briefly recall the parametrization; see \cite[\S2]{KHS20} for more details.
Let $y^2 = x^3 + A_i x + B_i$ ($i=1,2$) be two (nonsingular) elliptic curves, where $A_1,B_1,A_2,B_2\in K$.
Let $\lambda, \mu, \nu$ be elements of $K$ such that (i) $\mu \ne 0$ and $\nu \neq 0$, and (ii) $f_1$ and $f_2$ are coprime, where 
\begin{align}
f_1(x) &= x^3 + A_1 \mu^2 x + B_1\mu^3,\label{f1}\\
f_2(x) &= (x-\lambda)^3 + A_2 \nu^2(x-\lambda) + B_2 \nu^3.\label{f2}
\end{align}
A point $(\lambda : \mu : \nu) \in \bbP^2 (K)$ satisfying (i) and (ii) is said to be \emph{of Howe type} in \cite{KHS20}.
Note that the isomorphism classes of $E_1$ and $E_2$ are independent of the choice of $(\lambda,\mu,\nu)$ provided $\mu\ne 0$ and $\nu\ne 0$.
Then the desingularization $H$ of the fiber product $E_1\times_{{\bbP}^1}E_2$ is a Howe curve, and vice versa.

This parametrization, together with the criterion of superspeciality in Section~\ref{sec:Howe}, enables us to reduce the search for superspecial Howe curves into solving multivariate systems over $K$; it suffices to compute the solutions $(\lambda : \mu : \nu) \in \bbP^2(K)$ (of Howe type) to $a = b = c = d = 0$, where $a$, $b$, $c$ and $d$ are the entries of the Cartier--Manin matrix of the hyperelliptic curve $C\colon y^2 = f_1 f_2$.
Note that $a$, $b$, $c$ and $d$ are homogeneous as polynomials in $\lambda$, $\mu$ and $\nu$, and that $\ord_*(-)=O(p)$ for $*=\lambda,\mu,\nu$ and for $-=a,b,c,d$.

Note that the multivariate systems above are zero-dimensional, since
there are only finitely many points $(\lambda : \mu : \nu)$ parameterizing supersingular Howe curves (cf.\;\cite{KHS20}), whence the same thing holds for superspecial cases.
In fact, we may assume that the coordinates $A_1$, $B_1$, $A_2$, $B_2$, $\lambda$, $\mu$ and $\nu$ belong to $\bbF_{p^2}$:

\begin{proposition}\label{prop:field_of_def}
Any superspecial Howe curve is $K$-isomorphic to $H$ obtained as above for $A_1$, $B_1$, $A_2$, $B_2$, $\mu$, $\nu$ and $\lambda$ belonging to $\bbF_{p^2}$.
\end{proposition}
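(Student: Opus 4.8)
The plan is to show that a superspecial Howe curve, together with its extra structure, is defined over $\bbF_{p^2}$, and then to descend the parametrizing data $(A_1, B_1, A_2, B_2, \lambda, \mu, \nu)$ to that field. First I would recall the standard fact that a superspecial abelian variety over $\overline{\bbF}_p$, and in particular a superspecial curve together with any finite amount of extra structure on it, is defined over $\bbF_{p^2}$: the Frobenius $\pi$ of a superspecial abelian variety satisfies $\pi^2 = \pm p$ up to the usual caveats, so every superspecial curve admits a model over $\bbF_{p^2}$, and moreover a superspecial curve has only finitely many points, subvarieties, maps, etc.\ that are Galois-stable in the appropriate sense. Concretely, by Lemma~\ref{lem:ssp} a superspecial Howe curve $H$ comes with a genus-$2$ quotient $C$ that is superspecial, together with the division $\{W_1, W_2\}$ of six of its Weierstrass points and the pair $\{P_1, P_2\}$ of conjugate points, and by Corollary~\ref{C:isomorphism} this triple determines $H$ up to isomorphism.

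Next I would argue that the triple $(C, \{W_1, W_2\}, \{P_1, P_2\})$ itself descends to $\bbF_{p^2}$. Since $C$ is a superspecial genus-$2$ curve, it has a model over $\bbF_{p^2}$ (by Ibukiyama--Katsura--Oort~\cite{IKO}, or directly from the structure of the Frobenius); fix such a model. Its six Weierstrass points lie in $\bbP^1$ over a finite extension of $\bbF_{p^2}$, and the set of all $\binom{6}{3}/2 = 10$ unordered divisions into two triples is a finite $\operatorname{Gal}(\overline{\bbF}_p/\bbF_{p^2})$-set; likewise the $x$-coordinate of $\{P_1, P_2\}$ is some element of $\overline{\bbF}_p$. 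The point is that replacing $\bbF_{p^2}$ by a finite extension $\bbF_{q}$ does not change the argument, so it suffices to show the data is defined over \emph{some} finite field and then to observe, via the action of Frobenius, that in fact the relevant combinatorial and point data is already $\bbF_{p^2}$-rational. Here I would use that $\operatorname{Aut}(C)$ is defined over $\bbF_{p^2}$ and that the $q$-power Frobenius, for $q$ a large enough power of $p^2$, fixes everything; then a Galois-descent / Weil-cocycle argument, together with the triviality of the relevant $H^1$ (or simply the explicit rationality of Weierstrass points for a hyperelliptic curve given in the form $y^2 = f(x)$ with $f \in \bbF_{p^2}[x]$), brings the division and the pair $\{P_1, P_2\}$ down to $\bbF_{p^2}$.

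Finally I would translate this $\bbF_{p^2}$-rational triple back into the parametrization~\eqref{f1}--\eqref{f2}. Given $C\colon y^2 = f(x)$ over $\bbF_{p^2}$ with the division $\{W_1, W_2\}$ of its Weierstrass points and the common ramification point $P$ of the two genus-$1$ covers (which is the image of $\{P_1,P_2\}$ in $\bbP^1$, again $\bbF_{p^2}$-rational), one recovers the cubics $f_1, f_2$ from the two triples of Weierstrass points and then applies an automorphism of $\bbP^1$ over $\bbF_{p^2}$ sending $P$ to $\infty$ and normalizing $f_1, f_2$ into the shape $x^3 + A_1\mu^2 x + B_1\mu^3$ and $(x-\lambda)^3 + A_2\nu^2(x-\lambda) + B_2\nu^3$. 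Since all of these operations (choosing the automorphism, completing the cube, reading off $A_i, B_i, \lambda, \mu, \nu$) are rational over the field of definition of the data, we get $A_1, B_1, A_2, B_2, \lambda, \mu, \nu \in \bbF_{p^2}$, and by Corollary~\ref{C:isomorphism} the resulting Howe curve is $K$-isomorphic to the original $H$.

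The main obstacle is the descent step: going from ``defined over some finite field'' to ``defined over $\bbF_{p^2}$.'' One has to be careful that a model of the superspecial genus-$2$ curve $C$ over $\bbF_{p^2}$ is compatible with the rationality of the full triple, rather than just of $C$ as an abstract curve; twists of $C$ over $\bbF_{p^2}$ could in principle permute the combinatorial data. I would handle this either by invoking the fact that for a hyperelliptic curve any $\bbF_{p^2}$-model $y^2 = f(x)$ automatically has all its Weierstrass points and their Frobenius action visible over $\bbF_{p^2}$ (so the only real content is the $\bbF_{p^2}$-rationality of the \emph{division} and of the point $P$, which one checks directly using that the $q$-Frobenius acts trivially for suitable $q$ and then that the $p^2$-Frobenius must already fix a Galois-stable finite set of size coprime to relevant orders — in fact one can just choose the model so that the unordered pair $\{W_1, W_2\}$ is Frobenius-stable), or, more robustly, by citing the general principle that superspecial curves and all their subschemes, morphisms, and line bundles are defined over $\bbF_{p^2}$, which is exactly the kind of statement already used implicitly in~\cite{IKO} and~\cite{Ekedahl}.
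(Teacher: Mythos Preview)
Your overall idea---use superspeciality to descend to $\bbF_{p^2}$---is the right one, but the execution has a genuine gap at exactly the point you flag as ``the main obstacle.'' By routing the argument through the genus-$2$ curve $C$ and the triple $(C,\{W_1,W_2\},\{P_1,P_2\})$, you have to show that the division $\{W_1,W_2\}$ and the point $P$ are $\bbF_{p^2}$-rational. But these are \emph{not} intrinsic to $C$: a given superspecial $C$ admits many divisions and many choices of $P$, and there is no reason the $p^2$-Frobenius on an arbitrary $\bbF_{p^2}$-model of $C$ should fix the particular ones coming from $H$. Your option~(a) (``choose the model so that the unordered pair $\{W_1,W_2\}$ is Frobenius-stable'') asserts exactly what needs to be proved, and your option~(b) (``general principle'') is not a citable statement. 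The argument ``$q$-Frobenius acts trivially for large $q$, hence $p^2$-Frobenius fixes a finite set of size coprime to relevant orders'' does not work either: the Galois group is procyclic, so finiteness alone does not force an individual element of a Galois-stable set to be fixed by the generator.

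The paper avoids this by descending $H$ rather than $C$. Using Ekedahl's argument, one takes an $\bbF_{p^2}$-model of $H$ on which the $p^2$-Frobenius acts as $\pm p$ on $\Jac(H)$ and for which \emph{all} of $\Aut(H)$ is already defined over $\bbF_{p^2}$. Then the two commuting involutions $\iota_1,\iota_2$ cutting out the $V_4$-diagram are $\bbF_{p^2}$-rational, so the quotients $E_i=H/\langle\iota_i\rangle$ and $\bbP^1=H/\langle\iota_1,\iota_2\rangle$ are defined over $\bbF_{p^2}$. The single common ramification point $S_1\cap S_2$ is Galois-stable because it is a singleton, hence $\bbF_{p^2}$-rational, and can be sent to $\infty$ by an element of $\PGL_2(\bbF_{p^2})$. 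Finally, the remaining ramification points of $E_i\to\bbP^1$ are $2$-torsion points on $E_i$; since Frobenius on $E_i$ is also $\pm p$, one has $F(P)=\pm pP=P$, so they are $\bbF_{p^2}$-rational as well. This delivers $A_i,B_i,\lambda,\mu,\nu\in\bbF_{p^2}$ directly, with no need for Corollary~\ref{C:isomorphism}. Your option~(b) is morally this argument, but it only becomes a proof once you descend $H$ with its full automorphism group, not $C$ with auxiliary combinatorial data.
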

\begin{proof}
It suffices to consider the case of $K=\overline{\bbF}_{p^2}$,
since every supersingular elliptic curve can be defined over $\bbF_{p^2}$
and $(\lambda,\mu,\nu)$ is a solution of $a=b=c=d=0$.
Let $H'$ be a superspecial Howe curve over $K=\overline{\bbF}_{p^2}$.
Choose $E'_1$ and $E'_2$ over $K$ so that $H'$ is the normalization of
$E'_1\times_{\bbP^1} E'_2$.
It is well-known that $H'$ descends to a curve $H$ over $\bbF_{p^2}$
such that the Frobenius map $F$ (the $p^2$-power map) on $\Jac(H)$ is $p$ or $-p$ and all automorphisms of $H$ are defined over $\bbF_{p^2}$ (cf.\;the proof of \cite[Theorem~1.1]{Ekedahl}). Let $E_1$ and $E_2$ be the quotients of $H$ corresponding to $E'_1$ and $E'_2$. The quotient $E_i$ of $H$ is obtained by an involution $\iota_i \in\Aut(H)$, and therefore is defined over $\bbF_{p^2}$.
The quotient of $H$ by the group generated by $\iota_1$ and $\iota_2$
is isomorphic to $\bbP^1$ over ${\bbF_{p^2}}$.
Let $S_i$ be the set of the ramified points of $E_i \to \bbP^1$.
Since $S_1\cap S_2$ consists of a single point, this point is invariant under the action of the absolute Galois group of $\bbF_{p^2}$ and therefore is an $\bbF_{p^2}$-rational point. An element of $\PGL_2(\bbF_{p^2})$ sends this point to the infinite point of~$\bbP^1$.
Since the Frobenius map $F$ on $E_i$ is also $\pm p$,
the other elements $P$ of $S_i$ (which are $2$-torsion points on $E_i$)
are also $\bbF_{p^2}$-rational by $F(P)=\pm p P = P$.
This implies the desired result.
\end{proof}

\subsection{Concrete algorithm}\label{subsec:alg1}
Based on the reduction described in the previous subsection, we present a concrete algorithm (Algorithm \ref{alg:main1} below).
\begin{algorithm}\label{alg:main1}
\ 
\begin{description}
\item[Input] A rational prime $p > 3$.
\item[Output] A list $\calH(p)$ of superspecial Howe curves, each of which is represented by a pair $(f_1, f_2)$ of polynomials $f_1, f_2 \in \bbF_{p^2}[x]$.
\end{description}
\begin{enumerate}
\item Compute the set $\calS(p)$ of representatives of the $\overline{\bbF}_{p}$-isomorphism classes of supersingular elliptic curves in characteristic $p$ so that each representative is given in Weierstrass form $E_{A,B}\colon y^2 = f_{A,B}(x) \coloneqq x^3 + A x + B$ by a pair $(A, B)$ of elements in $\bbF_{p^2}$.
\item Set $\calH_0(p)$ $\leftarrow$ $\emptyset$.
For each pair of $E_{A_1,B_1}$ and $E_{A_2, B_2}$ in $\calS(p)$, possibly choosing $(A_1,B_1) = (A_2,B_2)$, conduct the following (2-1)-(2-3) to compute all $(\lambda , \mu, \nu ) \in (\bbF_{p^2})^3$ of Howe type such that the desingularization $H$ of $E_1 \times_{\bbP^1} E_2$ is superspecial, where $E_1\colon w^2 = f_1$ (resp.\ $E_2\colon z^2 = f_2$) is an elliptic curve $\bbF_{p^2}$-isomorphic to $E_{A_1,B_1}$ (resp.\ $E_{A_2,B_2}$).
\begin{enumerate}
\item[(2-1)] Compute the Cartier--Manin matrix $M$ given in \eqref{Cartier-Manin matrix}.
\item[(2-2)] Compute the set $\calV(A_1,B_1,A_2,B_2)$ of elements $(\lambda, \mu, \nu) \in (\bbF_{p^2})^3$ (with $\nu = 1$) such that $M=0$.
\item[(2-3)] For each $(\lambda, \mu, \nu) \in \calV(A_1,B_1,A_2,B_2)$:
If $\mu \neq 0$ and $\nu \neq 0$, set $\calH_0(p)$ $\leftarrow$ $\calH_0(p) \cup \{ (f_1, f_2) \}$, where $f_1$ and $f_2$ are as in \eqref{f1} and \eqref{f2}.
\end{enumerate}
{\bf Note:} By Lemma 4.4 and  Proposition 4.6 of \cite{KHS20}, for each root $(\lambda, \mu , \nu)$ computed in Step (2-2), the cubics $f_1$ and $f_2$ are coprime if $\mu \neq 0$ and $\nu \neq 0$.
Moreover, it suffices to compute elements $(\lambda, \mu, \nu)$ with $\nu = 1$; see Remark 4.2 of \cite{KH20} for more details.
\item Set $\calH(p)$ $\leftarrow$ $\emptyset$.
For each $(f_1, f_2) \in \calH_0(p)$:
If the Howe curve $H$ represented by $(f_1, f_2)$ is not isomorphic to any Howe curve of $\calH(p)$, set $\calH(p)$ $\leftarrow$ $\calH(p) \cup \{ H \}$.
\end{enumerate}
\end{algorithm}

The complexity of this algorithm is estimated as $\tilde{O}(p^6)$, as long as $\# \calH_0 (p) = O(p^3)$; see Subsection \ref{subsec:alg1comp} below for more details.

\begin{remark}
If one would search for a single example of a superspecial Howe curve (or determine the non-existence of such a curve), it suffices to decide the (non)-existence of a root in Step (2-2).
In this case, it will be estimated in the next subsection that the complexity is $\tilde{O} (p^5)$. 
\end{remark}

\subsection{Complexity of the first approach}\label{subsec:alg1comp}

We here briefly discuss the complexity of Algorithm \ref{alg:main1} together with several variants of computing the roots of a multivariate system in Step (2-2).
For reasons of space, we give only a summary of the estimation of the complexity, and refer to \cite[\S5.1]{KH20} for most of details.
In the following, all time complexity bounds refer to arithmetic complexity, which is the number of operations in $\bbF_{p^2}$.
We denote by $\mathsf{M}(n)$ the time to multiply two univariate polynomials over $\bbF_{p^2}$ of degree $n$.

For Step (1), one can check that its complexity is dominated by the cost of computing all supersingular $j$-invariants in characteristic $p$.
This cost is bounded by $O ({\log}^{2} (p) \mathsf{M}(p) ) = \tilde{O}(p)$, see \cite[\S5.1.1]{KH20} for details.

For Step (2), clearly the complexities of Steps (2-1) and (2-2) are larger than that of Step (2-3).
In Step (2-1), we compute the Cartier--Manin matrix $M$ from $f\coloneqq f_1 f_2$ \emph{with indeterminates} $\lambda$ and $\mu.$
The cost of computing $M$ is bounded by $\tilde{O}(p^3)$, see Remark \ref{rem:compHW} below.
In Step (2-2), there are three variants (i)-(iii) to compute all $(\lambda, \mu, \nu) \in (\bbF_{p^2})^3$ with $\nu = 1$ such that $M=0$, where $M$ is the Cartier--Manin matrix as in \eqref{Cartier-Manin matrix} with entries $a$, $b$, $c$ and $d$.
\begin{enumerate}
\item[(i)] Use brute force to enumerate all    $(\lambda, \mu) \in (\bbF_{p^2})^2$ to check $M =0$ or not.
\item[(ii)] Regard one of $\lambda$ and $\mu$ as a variable.
For simplicity, regard $\lambda$ as a variable.
For each $\mu \in \bbF_{p^2}$, compute the roots in $\bbF_{p^2}$ of $G\coloneqq \gcd(a, b, c, d) \in \bbF_{p^2}[\lambda ]$.
\item[(iii)] Regarding both $\lambda$ and $\mu$ as variables, use an approach based on resultants.
\end{enumerate}

It is estimated that the complexity of (i) is $O(p^5)$, and that those of (ii) and (iii) are bounded by the same bound $\tilde{O} (p^4)$;
more precisely the upper-bound of the complexity of (ii) is less than that of (iii) if we consider logarithmic factors, see \cite[\S5.1.2]{KH20}.

From this, we adopt the fastest variant (ii) with complexity $\tilde{O}(p^4)$ in our implementation.
The number of $(\lambda, \mu, \nu)$ with $\nu = 1$ computed in Step (2-2) is $ \leq p^2 \times \mathrm{deg}(G)  = O (p^3)$.
Since the number of possible choices of $(E_{A_1,B_1},E_{A_2,B_2})$ is $\# \calS(p) = O (p^2)$, computing $(\lambda, \mu, \nu)$ with $\nu = 1$ for all $(E_{A_1,B_1},E_{A_2,B_2})$ is done in $\# \calS(p) \times  \tilde{O}(p^4) = \tilde{O} (p^6) $ operations in $\bbF_{p^2}$.

For Step (3), the complexity of this step heavily depends on the number of superspecial Howe curves obtained in Step (2), that is, $\# \calH_0(p)$.
Since each isomorphism test is done in $O(1)$, the complexity of Step (3) is $O ( (\# \calH_0(p) )^2 )$.
As of this writing, we have not succeeded in finding any sharp bound on $\# \calH_0(p)$.
We can naively estimate $\# \calH_0(p) = O (p^5)$ from the complexity analysis of Step (2), whereas we expect $\# \calH_0 (p) = O(p^3)$ from the practical behavior \cite[\S4.2, Table 1]{KH20}.
Thus, the complexity of Step (3) is naively $O(p^{10})$, but in practice $O (p^{6})$ which does not exceed the complexity of Steps (1)-(2).

Note that to determine the (non-)existence of a superspecial Howe curve, it is not necessary to compute a root in Step (2-2), but it suffices to compute the gcd $G$ only.
Since each gcd can be computed in time $\tilde{O}(p)$ by fast gcd algorithms, one can verify that the total complexity of this variant of Algorithm \ref{alg:main1} is $\tilde O(p^5)$.

\begin{remark}\label{rem:compHW}
In Step (2-1), we compute a Cartier--Manin matrix over $\bbF_{p^2}[\lambda, \mu]$.
Bostan, Gaudry, and Schost showed that
in general, computing the Cartier--Manin matrix $M$ of a hyperelliptic curve $y^2 = f(x)$ defined over a field $K$ can be accomplished by multiplying matrices obtained from recurrences 
for the coefficients of $f(x)^n$; see \cite[\S8]{BGS} or \cite[\S2]{HS2} for details.
The algorithm of Harvey and Sutherland \cite{HS2}, which is an improvement of their earlier algorithm~\cite{HS} presented at ANTS XI, is also based on this reduction, and it is the fastest algorithm to compute $M$ for the case of $K=\bbF_p$.
From this, we suspect that one of the best ways to compute $M$ in Step (2-1) would be to extend the Harvey--Sutherland algorithm \cite{HS2} to the case of $\bbF_{p^2} (\lambda, \mu)$.
However, since we have not yet succeeded in making this extension, we compute $M$ using the reduction mentioned above, or by using formul{\ae} given in \cite[\S4]{KHS20} for $M$ specific to Howe curves.
It is estimated (to appear in a revised version of \cite{KH20}) that the complexity of the latter method is bounded by $\tilde{O}(p^3)$, which is less than or equal to that of Step (2-2).
\end{remark}

\section{Second approach: Use of Richelot isogenies of 
genus-\texorpdfstring{$2$}{2} curves}
\label{sec:second}
In this section we propose another approach to enumerating superspecial Howe 
curves. As opposed to the approach in Section \ref{sec:first}, this second 
approach \emph{starts} with a superspecial genus-$2$ curve $C$, and then looks
to see whether it will fit into a $V_4$-diagram with supersingular elliptic 
curves. While this is precisely the structure of Algorithm~5.7 of \cite{Howe},
the problem remains: How can we \emph{quickly} produce a list of \emph{all} of
the superspecial genus-$2$ curves? We begin by addressing this question.

\subsection{Computing superspecial curves of genus \texorpdfstring{$2$}{2}}
To produce a list  $\calL$ of all superspecial genus-$2$ curves, we use a
variant of~\cite[Algorithm~5.7]{Howe}.
Each superspecial genus-$2$ curve has a unique model defined over $\bbF_{p^2}$ that is maximal over $\bbF_{p^2}$.
Given one such curve, all of the curves that are Richelot isogenous to it are also maximal superspecial curves. Thus, given a not-necessarily-complete list of maximal superspecial curves, we can add curves to the list as follows: 
We go through the list one curve at a time.
For each $C$ we compute the curves that are Richelot isogenous to it, and we add each such curve to the list if it is not already on it.
To seed our list, we can use the curves that are $(2,2)$-isogenous to a product of maximal elliptic curves.
Then a result of Jordan and Zaytman~\cite[Corollary~18]{JZ} shows that this 
procedure will generate a complete list $\calL$ of all superspecial genus-$2$
curves.

The exact number of curves on the list $\calL$ is given by a result of
Ibukiyama, Katsura, and Oort~\cite[Theorem~3.3]{IKO}. The exact answer depends
on the congruence class of $p$ modulo $120$, but it follows from their
result that for $p>3$ we have
\[\#\calL = \frac{(p-1)(p^2 + 25 p + 166)}{2800} + c,
\text{\quad where\quad}
\frac{-1}{16}\le c\le \frac{209}{180}.
\]

\subsection{Testing whether a genus-\texorpdfstring{$2$}{2} curve fits into a \texorpdfstring{$V_4$}{V4}-diagram}

For each $C \in \calL$, given by an equation
\[   
y^2 = (x - a_1)(x - a_2)(x - a_3)(x - a_4)(x - a_5)(x - a_6),
\]
we would like to try to fit $C$ into a Howe curve diagram. 
For each of the ten ways of splitting the Weierstrass points into two groups of three (for example, into $\{ \{a_1,a_2,a_3\}, \{a_4,a_5,a_6\} \}$), we could then ask for the values of $b$ such that the two genus-$1$ curves
\begin{equation}\label{eq:genus1:1}
y^2 = (x - b)(x - a_1)(x - a_2)(x - a_3)
\end{equation}
and
\begin{equation}\label{eq:genus1:2}
y^2 = (x - b)(x - a_4)(x - a_5)(x - a_6)
\end{equation}
are both supersingular.
(We also consider ``$b=\infty$,'' corresponding to the
curves $y^2 = (x - a_1)(x - a_2)(x - a_3)$ and $y^2 = (x - a_4)(x - a_5)(x - a_6)$.
Since there are about $p/12$ supersingular $j$-invariants and hence about $p/2$ supersingular $\lambda$-invariants, there are about $p/2$ values of $b$ that will make the first curve \eqref{eq:genus1:1} supersingular, and we can compute these values in time $\tilde{O}(p)$.
For each $b$, we then check whether the second curve \eqref{eq:genus1:2} is supersingular.
If we were to model this as choosing a random $\lambda$-invariant
in $\bbF_{p^2}$ and asking whether it is supersingular, 
we would expect success with probability around $1/(2p)$.

It is easy to incorporate isomorphism testing into this algorithm so that it produces each superspecial Howe curve exactly once: 
All we have to do is keep track of how the automorphism group of $C$
acts on the divisions of its Weierstrass points and on the good values of~$b$.

Thus, in time $\tilde{O}(p^4)$, we can produce unique representatives for each superspecial Howe curve. 
Heuristically, the number of superspecial Howe curves we find should be the number of superspecial genus-$2$ curves ($\approx p^3/2880$), times the number of Weierstrass point divisions ($10$), times the number of values of $b$ that make the first  elliptic curve supersingular ($\approx p/2$), times the probability that the second curves is supersingular ($\approx 1/(2p)$). Heuristically, then, we
expect to find about $p^3/1152$ superspecial Howe curves.


\subsection{Concrete algorithm}
\begin{algorithm}\label{alg:main2}
\ 
\begin{description}
\item[Input] A rational prime $p > 3$.
\item[Output] A list $\calH(p)$ of superspecial Howe curves, 
      each of which is represented by a pair $(f_1, f_2)$ of
      polynomials $f_1, f_2\in \bbF_{p^2}[x]$, corresponding to
      the curve $y^2 = f_1, z^2 = f^2$.
\end{description}
\begin{enumerate}
\item Compute the set $\mathrm{MaxEll}(p^2)$ of $\bbF_{p^2}$-isomorphism 
      classes of $\bbF_{p^2}$-maximal elliptic curves over 
      $\bbF_{p^2}$. Since every supersingular curve has a unique maximal
      twist, this can be done as in Step (1) of Algorithm \ref{alg:main1}.
\item $\calL \leftarrow \emptyset$.
      For each pair $(E, E')$ of elements in $\mathrm{MaxEll}(p^2)$,
      compute the (at most $6$) curves $C$
      whose Jacobians are $(2,2)$-isogenous to $E\times E'$ 
      (see~\cite[\S 3]{HLP}). Adjoin each of these to $\calL$ if it is
      not isomorphic to an element of $\calL.$
\item Write $\calL = \{C_1,\ldots,C_n\}$. Set $i=1$.
      \begin{enumerate}
      \item \label{previousstep}
            For each nonsingular curve $C'$ which is Richelot isogenous to
            $C_i$: If $C'$ is not isomorphic to any element of $\calL$,
            set $N\leftarrow |\calL|$ and put $C_{N+1}\coloneqq C'$ and
            $\calL\leftarrow \calL\cup\{C_{N+1}\}$.
      \item If $i < |\calL|$, set $i\leftarrow i+1$ and go to 
            \eqref{previousstep}.
      \end{enumerate}
\item \label{nextstep}
      Set $\calH(p)\leftarrow \emptyset$. 
\item \label{bigstep}
      For each $C \in \calL$: Check whether $C$ fits into a Howe curve
      diagram with supersingular double covers $E_i \rightarrow \bbP^1$. 
      \begin{enumerate}
      \item For each splitting of the Weierstrass point of $C$ into two 
            disjoint three-element sets, compute the $j$-invariants of the
            genus-$1$ curves \eqref{eq:genus1:1} and \eqref{eq:genus1:2}, 
            as functions of the indeterminate~$b$. 
            Find the values of $b$ that make the first curve supersingular,
            and for each such value, check to see whether the second curve 
            is supersingular.
            Record each value of $b$ for which both curves are supersingular.
      \item Using Corollary~\ref{C:isomorphism}, find unique representatives
            $y^2 = f_1, z^2 = f_2$ for the curves produced in the previous 
            step, and adjoin $(f_1,f_2)$ to $\calH(p)$.
      \end{enumerate}
\end{enumerate}      
\end{algorithm}

We noted in the previous subsection that Step~\eqref{bigstep} takes
$\tilde{O}(p^4)$ arithmetic operations over $\bbF_{p^2}$, and the
other steps clearly take fewer operations than this.

\section{Implementations and proofs}\label{sec:algorithm}

In this section, we describe our implementations of the algorithms
in the previous sections and our proofs of the main results stated in the Introduction. As we have seen, there are two approaches to enumerating
superspecial Howe curves: (A) $(E_1,E_2)$-first
and (B) $C$-first. 
The arguments in the previous sections show that (B) has an advantage in the complexity analysis.
Here we see that (B) is much superior to (A) also when we execute their implementations. Indeed, Theorems \ref{thm:main1} and \ref{thm:main2} in 
the Introduction were obtained by Magma implementations based on (B) that were run on a PC with ubuntu 16.04 LTS OS at 3.40GHz CPU (Intel Core i7-6700) and 15.6 GB memory. The same result for $p\le 53$ was obtained by implementing the method (A) over Magma with an execution by the same PC. Although
it took 11871 seconds to obtain Theorem \ref{thm:main2} for $p\le 53$ by (A), the second strategy (B) finishes the enumeration for $p \le 199$
only in 924 seconds; see Table \ref{tableAB:2} for benchmark timing data for small $p$.
\begin{table}[ht]
\caption{Benchmark timing data for (A) Algorithm \ref{alg:main1}
and (B) Algorithm \ref{alg:main2}. All times shown are in seconds.}
\label{tableAB:2}
\vspace{-1ex}
\begin{center}
\renewcommand{\arraystretch}{0.9}
\begin{tabular}{rrrrrrrrrrr}
\toprule
  $p$ & \multicolumn{1}{c}{(A)} & \multicolumn{1}{c}{(B)} &\hbox to 1em{}& $p$ &  \multicolumn{1}{c}{(A)} &  \multicolumn{1}{c}{(B)} &\hbox to 1em{}& $p$ &  \multicolumn{1}{c}{(A)} &  \multicolumn{1}{c}{(B)}\\
\cmidrule{1-3}\cmidrule{5-7}\cmidrule{9-11}
  $5$ &  $0.02$ &  $0.08$ &&  $19$ &   $6.14$ &  $0.12$ && $41$ &  $1118.63$ &  $0.71$ \\
  $7$ &  $0.01$ &  $0.01$ &&  $23$ &  $27.59$ &  $0.21$ && $43$ &  $1423.26$ &  $0.80$ \\
 $11$ &  $0.17$ &  $0.04$ &&  $29$ & $114.70$ &  $0.31$ && $47$ &  $2686.17$ &  $1.03$ \\
 $13$ &  $0.76$ &  $0.05$ &&  $31$ & $193.82$ &  $0.34$ && $53$ &  $5678.32$ &  $1.46$ \\
 $17$ &  $3.92$ &  $0.09$ &&  $37$ & $617.23$ &  $0.54$ &&      &            &         \\
\bottomrule
\end{tabular}
\end{center}
\end{table}
The code for our implementations is available on the first author's web site
for (A) and on the third author's web site for (B).
In case (A), it is very costly to find Cartier--Manin matrices, and in addition to that there are many pairs $(E_1, E_2)$ of supersingular elliptic curves. This fact is consistent with the complexity analysis in Section \ref{subsec:alg1comp}.
On the other hand, the method (B) contains few intensive computations
and it enables us to find and enumerate superspecial Howe curves very efficiently. 

The preceding remarks prove the computational results in Theorems~\ref{thm:main1} and~\ref{thm:main2}, and we are left to prove
the statement in Theorem~\ref{thm:main1} concerning primes $p\equiv 5\bmod 6.$
This fact is shown by using the Howe curve defined by $E_1\colon z^2y=x^3+y^3$ and $E_2\colon w^2y=x^3+ay^3$ with $a\in\{-1,1/4\}$.
Indeed, if $p\equiv 5 \bmod 6$, then
these two elliptic curves are supersingular and moreover
$y^2 = (x^3+1)(x^3+a)$ is superspecial.
This can be checked 
by observing that the curve has two nonhyperelliptic involutions,
given by $(x,y)\mapsto (a^{1/3}/x, \pm a^{1/2}y/x^3)$, so that its Jacobian
is $(2,2)$-isogenous to a product of elliptic curves. For $a = -1$ we find that
these two curves are both isomorphic to the $j=0$ curve with CM by $-3$, 
and for $a = 1/4$ we find that they are both isomorphic to the $j = -12288000$
curve with CM by $-27$. In both cases, these elliptic curves are supersingular 
for primes $p\equiv 5\bmod 6$.

We remark that this curve for $a=1/4$ is isomorphic to the curve $X^3+Y^3+W^3=2YW+Z^2=0$ in $\bbP^3$ studied by the first author in \cite{K19}, by the correspondence $x=X$, $y=Y+W$, $z=\sqrt{-3/2}Z$ and $w=\sqrt{-3/4}(Y-W)$.

\bibliography{ANTS}
\bibliographystyle{hplaindoi}

\end{document}